\newtheorem{Thm}{Theorem}[section]
\theoremstyle{definition}
\newtheorem{Theorem}[Thm]{Theorem}
\newtheorem{Lemma}[Thm]{Lemma}
\newtheorem{Corollary}[Thm]{Corollary}
\newtheorem{Proposition}[Thm]{Proposition}
\newtheorem{Example}[Thm]{Example}
\theoremstyle{remark}
\newtheorem{Remark}{Remark}
\font\ym=msbm10  
\newcommand{\Aut}{{\rm Aut}}
\newcommand{\Z}{\text{\ym Z}}
\newcommand{\R}{\text{\ym R}}
\newcommand{\T}{\text{\ym T}}
\newcommand{\C}{\text{\ym C}}
\newcommand{\sA}{\mathscr A}
\newcommand{\sR}{\mathscr R}
\newcommand{\sT}{\mathscr T}
\newcommand{\End}{\hbox{\rm End}}
\title[boundary measure]{Boundary measures of holomorphic functions on the imaginary domain}
\author{Shigeru Yamagami}
\begin{document}
\maketitle
\begin{center}
Graduate School of Mathematics
\end{center}
\begin{center}
Nagoya University 
\end{center}
\begin{center} 
Nagoya, 464-8602, JAPAN
\end{center} 

\medskip

\medskip

\keywords{Keywords: boundary distribution, boundary measure, integral representation, M\"obius transform, Pick function}

\subjclass{MSC2020: 46G99, 31A10} 
\begin{abstract}
  In connection with the Herglotz-Nevanlinna integral representation of so-called Pick functions,
  we introduce the notion of boundary measure of holomorphic functions on the imaginary domain and elucidate some of basic properties.
\end{abstract}

\section*{Introduction}

In our previous paper \cite{HY}, we investigated supplementary results to the classical integral representation theorem of Herglotz, Riesz
and Nevanlinna on holomorphic endofunctions of the upper half plane.
As an integral transformation, it is closely related to the Cauchy-Stieltjes transform and we know a satisfactory characterization of
the transform of a positive measure as a holomorphic function on the upper half plane. 
Although the transform itself is applicable to complex measures, its description as a holomorphic function seems somewhat awkward.
We have however a pragmatic characterization in terms of existence of boundary measures if its support is not full (\cite{Y}),
which was one of motivations in \cite{HY}.

Here we shall extend it to boundary measures of any support under the assumption of some estimates 
on the boundary behaviour of holomorphic functions, which is in fact necessary for the existence of boundary measure.

To deal with the case of full support, we need to work on several (at least two) local coordinates to describe boundary limits.
The main problem there is the compatibility of locally defined boundary measures and we here impose localized boundary estimates
to check the compatibility of measures arising from local coordinates.
We then restrict ourselves to the most simple (and seemingly most interesting) choice of local coordinates, i.e.,
the original one $z$ and its inversion $\widetilde z = -1/z$,
and show that the associated local measures are related to each other under the inversion 
and the joined measure on the whole boundary turns out to fulfill the integral representation.

During its formulation, it turns out that the assumption on boundary behavior enables us to capture boundary limits
as distributions of mild singularity (such as Cauchy's principal value) and
the problem of coordinate transformation is soloved first within boundary distributions.
Boundary measures are then worked out as a result of relevant facts on boundary distributions. 

The author is grateful to M.~Nagisa and M.~Uchiyama for fruitful conversations, which were big impetuses to the present work.

\section{Representing Measures}
Let $\overline{\C} = \C \sqcup \{ \infty\}$ be the Riemann sphere and $\overline{\R} = \R \sqcup \{\infty\}$
be the extended real line in $\overline{\C}$. 
For a holomorphic function $\varphi(w)$ of $w \in \C \setminus \R$,
a complex Radon measure $\lambda$ on $\overline{\R}$ is called the \textbf{representing measure} of $\varphi$ if it satisfies 
\[
  \varphi(w) = \frac{\varphi(i) + \varphi(-i)}{2} + \int_{\overline{\R}} \frac{1+sw}{s-w}\, \lambda(ds)
  \quad
  (w \in \C \setminus \R). 
\]
The representing measure, if any exists, is unique
because $\{(1+sw)/(s-w); w \in \C \setminus \R\}$ is total in the space $C(\overline{\R})$ of continuous functions on $\overline{\R}$.
Related to the representing measure, given a holomorphic function $\varphi$ on $\C \setminus (\R \sqcup K)$
with $K$ a compact subset of $\C \setminus \R$,
the \textbf{boundary support} $[\varphi]$ of $\varphi$ is defined to be the complement in $\overline{\R}$ of an open subset 
\[
\{ c \in \overline{\R}; \text{$\varphi(z)$ is continuously extended to a neighborhood of $c$ in $\overline{\C}$} \}. 
\]
Note that $[\varphi]$ is a support in the sense of hyperfunction, which turns out to be the support of the representing measure.

The existence of representing measure is a strong condition on holomorphic functions.

\begin{Example}
  A holomorphic function $\varphi$ on $\C \setminus \R$ is extended to to an entire function on $\C$ if and only if $[\varphi] = \{ \infty\}$. 
  As for an entire function, the representing measure exists if and only if it is an affine function. 
\end{Example}

By the holomorphic change-of-variable
\[ 
  z = \frac{i-w}{i+w}, 
\quad 
  w = i\frac{1-z}{1+z} 
\]
of $\overline{\C}$,
the upper half plane $\C_+ = \{ w \in \C; \text{Im}\,w > 0\}$ is bijectively mapped onto the unit disk $D = \{ z \in \C; |z| < 1\}$
and, under the correspondence $\phi(z) = -i\varphi(w)$, 
we have the disk version of representing measure:
Let $\T = \{ z \in \C; |z| = 1\}$ be the boundary of $D$. A holomorphic function $\phi$ on $\overline{\C} \setminus \T$ 
admits the representing measure if there exists a complex Radon measure $\mu$ on $\T$ satisfying
\[
  \phi(z) = \frac{\phi(0) + \phi(\infty)}{2} + \frac{1}{2\pi} \int_\T \frac{\zeta + z}{\zeta - z}\, \mu(d\zeta).
\]

In view of the Jordan decomposition on signed measures and the Herglotz-Riesz integral representation,
functions having representing measures are exactly linear combinations of positive ones (so-called Pick functions on $\C_+$).

To make the relevant structure clear, we introduce a *-operation on the space $\sR$ of holomorphic functions on $\C \setminus \R$ by
$\varphi^*(w) = \overline{\varphi(\overline{w})}$ and a *-operation\footnote{A minus sign is put here to relate the real part of $\phi$ to
  the imaginary part of $\varphi$.} on the space $\sT$ of holomorphic functions
on $\overline{\C} \setminus \T$ by $\phi^*(z) = - \overline{\phi(1/\overline{z})}$.
Then $\sR$ and $\sT$ are *-isomorphic by the correspondence $\varphi(w) = i \phi(z)$ so that
reality $\overline{\varphi(w)} = \varphi(\overline{w})$ ($w \in \C \setminus \R$) is equivalent to $\varphi^* = \varphi$ for $\varphi \in \sR$,
while imaginarity $\overline{\phi(z)} = - \phi(1/\overline{z})$ ($z \in \overline{\C} \setminus \T$) is equivalent to
$\phi^* = \phi$ for $\phi \in \sT$.

The *-operations behave well with respect to (the existence of) representing measures:
In the case of the extended real boundary $\overline{\R}$,
the representing measure of $\varphi^*$ is given by the complex conjugate $\overline{\lambda}$ of $\lambda$ as
\[
  \varphi^*(w) = \frac{\varphi^*(i) + \varphi^*(-i)}{2} + \int_{\overline{\R}} \frac{1+sw}{s-w}\, \overline{\lambda}(ds), 
\]
and, in the case of the circle boundary $\T$, as 
\[
  \phi^*(z) = \frac{\phi^*(0) + \phi^*(\infty)}{2} + \frac{1}{2\pi} \int_\T \frac{\zeta + z}{\zeta - z}\, \overline{\mu}(d\zeta). 
\]
Thus the existence of representing measure is reduced to the case of signed measures
by extracting $(\varphi + \varphi^*)/2$ and $(\varphi - \varphi^*)/2i$ (resp.~$(\phi + \phi^*)/2$ and $(\phi - \phi^*)/2i$)
with the corresponding signed measures $(\lambda + \overline{\lambda})/2$ and $(\lambda - \overline{\lambda})/2i$
(resp.~$(\mu + \overline{\mu})/2$ and $(\mu - \overline{\mu})/2i$). 

Recall here that the Poisson integral formula of disk version 
\[
\phi(z) = i \text{Im}\, \phi(0) + \frac{1}{2\pi} \oint_0^{2\pi} \frac{e^{it} + z}{e^{it} - z}\, \text{Re}\, \phi(e^{it})\, dt 
\]
holds for $|z| < 1$ if a holomorphic function $\phi$ on $D$ is
continuously extended to $\overline{D} = D \sqcup \T$.
Then the imaginary extension of $\phi$, i.e., $\phi(z) = - \overline{\phi(1/\overline{z})}$ for $z \in \overline{\C} \setminus \overline{D}$,  
satisfies $\phi^*(z) = \phi(z)$ ($z \in \overline{\C} \setminus \T$) and
\[
  i\text{Im}\,\phi(0) = \frac{\phi(0) - \overline{\phi(0)}}{2} = \frac{\phi(0) + \phi^*(\infty)}{2}
  = \frac{\phi(0) + \phi(\infty)}{2}. 
\]

Moreover, if $z \in \overline{\C} \setminus \overline{D}$, we have 
\[
  \phi(z) = \phi^*(z) = - \overline{\phi(1/\overline{z})}
  = i \text{Im}\, \phi(0) + \frac{1}{2\pi} \oint_0^{2\pi} \frac{e^{it} + z}{e^{it} - z}\, \text{Re}\, \phi(e^{it})\, dt.  
\]
Thus, for the imaginary extension of $\phi$ in this case, the representing measure exists and takes the form
\[
  \displaystyle \mu(dt) = \text{Re}\,\phi(e^{it})\,dt = \lim_{r \uparrow 1} \text{Re}\, \phi(re^{it})\, dt
\]
relative to the parametrization $\zeta = e^{it}$ of $\T$. 
Notice here that, the boundary values exist both for $\lim_{r \uparrow 1}\phi(re^{i\theta})$ and $\lim_{r \downarrow 1} \phi(re^{i\theta})$ but
their difference amounts to 
\[
    \lim_{r \uparrow 1} \phi(r e^{i\theta}) -  \lim_{r \downarrow 1} \phi(r e^{i\theta})
    =  \lim_{r \uparrow 1} \phi(r e^{i\theta}) + \lim_{r \downarrow 1} \overline{\phi(e^{i\theta}/r)}
    = 2 \text{Re}\,\phi(e^{i\theta}).
\]
  
Returning to the initial situation without boundary continuity,
for the existence problem of representing measure $\mu$ of $\phi = \phi^*$, we may further assume that
$\text{Im}\, \phi(0) = 0 \iff \phi(0) + \phi(\infty) = 0$ without modifying $\mu$ 
by adjusting imaginary additive constants.


In that case (i.e., $\phi = \phi^*$ and $\text{Im}\,\phi(0) = 0$), if a representing measure $\mu$ exists, 
its Jordan decomposition $\mu = \mu_+ - \mu_-$ with $|\mu| = \mu_+ + \mu_-$ and $\mu_+ \perp \mu_-$ 
gives rise to the decomposition $\phi = \phi_+ - \phi_-$, where
\[
    \phi_\pm(z) = \frac{1}{2\pi} \oint_0^{2\pi} \frac{\zeta + z}{\zeta - z}\, \mu_\pm(dt)
\]
are holomorphic functions of $z \in \overline{\C} \setminus \T$ satisfying $\phi_\pm^* = \phi_\pm$,
$\text{Im}\,\phi_\pm(0) = 0$ and $\text{Re}\, \phi(z) \geq 0$ ($|z| < 1$)
(i.e., $\text{Re}\, \phi(z) \leq 0$ for $|z| > 1$).
Moreover, the representing measure $\mu_\pm$ of $\phi_\pm$ is expressed by
\[
    \oint f(t)\, \mu_\pm(dt) = \lim_{r \uparrow 1} \oint f(t) \text{Re}\, \phi_\pm(re^{it})\, dt
\]
for each $f \in C(\T)$ (Herglotz-Riesz-Nevanlinna) and hence the signed measure on $\T$ 
\[
    \mu_r(dt) = \Bigl(\text{Re}\, \phi_+(re^{it}) - \text{Re}\, \phi_-(re^{it})\Bigr)\, dt
    = \frac{1}{2} \Bigl(\phi(re^{it}) - \phi(r^{-1}e^{it})\Bigr)\, dt 
\]
parametrized by $0 < r < 1$ 
converges weakly to $\mu$ as $r \uparrow 1$.



 \section{Boundary Limits by Circles}
 Now reverse the process and suppose that the signed measure $\mu_r$ on $\T$ associated with $\phi = \phi^*$ converges
 as a linear functional on $C(\T)$ to a signed Radon measure $\mu$ when $r \uparrow 1$.
 
 Then $\mu$ is the representing measure of $\phi$; 
 letting $\phi_\mu = \phi_+ - \phi_-$, we need to show $\phi = \phi_\mu$.
 Since $2\mu(dt) = \lim_{r \uparrow 1} \bigl(\phi_\mu(re^{it}) - \phi_\mu(r^{-1} e^{it})\bigr)\, dt$ as observed above,
 replacing $\phi$ with $\phi - \phi_\mu$, we may assume that $\mu = 0$ and the problem is to show that $\phi = 0$
 under the extra assumption $\text{Im}\,\phi(0) = 0$.

 Though this can be deduced easily from analogous facts on harmonic functions (see \cite[Theorem 1.1]{Du} for example), 
 we shall here present a direct proof by modifying arguments in \cite[Appendix A]{Y}.
 
 Regard $\mu_r$ as a bounded linear functional on $C(\T)$ so that the assumption $\mu = 0$ implies
 $\lim_{r \uparrow 1} \mu_r(f) = 0$ for $f \in C(\T)$. Since $\mu_r(f)$ is continuous in $0 < r < 1$, this implies that
 $\mu_r(f)$ is extended to a continuous function of $0 < r \leq 1$ so that $\mu_1(f) = 0$.
 By the Banach-Steinhaus theorem, the functional norm $\| \mu_r\|$ is then bounded in $0 < r < 1$. 
 Notice also that
 \[
   \lim_{r \downarrow 0} \phi(re^{it}) = \phi(0),
   \quad
   \lim_{r \uparrow \infty} \phi(re^{it})
   = - \lim_{r \uparrow \infty} \overline{\phi(r^{-1} e^{it})} = - \overline{\phi(0)}. 
 \]
 
 Given $f \in C(\T)$, the rotational average (convolution) defined by a periodic integral 
 \[
   \phi_f(z) = \oint_0^{2\pi} f(t) \phi(ze^{it})\, dt
 \]
 is a holomorphic function of $z \in \overline{\C} \setminus \T$ satisfying $\phi_f^* = \phi_{\overline{f}}$. 
 By using translation $(T_\theta f)(t) = f(t-\theta)$ and the polar form $z = r e^{i\theta}$, we have 
 \begin{align*}
   \mu_r(T_\theta f)
   &= \oint f(t-\theta) \Bigl( \phi(re^{it}) - \phi(r^{-1} e^{it}) \Bigr)\,dt\\
   &= \oint f(t) \Bigl( \phi(r e^{i\theta}e^{it}) - \phi(r^{-1} e^{i\theta} e^{it}) \Bigr)\,dt\\
   &= \phi_f(z) - \phi_f(1/\overline{z}). 
 \end{align*}
 Since $T_\theta f$ is norm-continuous in $\theta$ and $\phi_f(z)$ is continuous in $z \in  \overline{\C} \setminus \T$,
 the norm boundedness of $\mu_r$ ($0 < r < 1$) implies that $\mu_r(T_\theta f)$ is extended to a continuous function of $r \in (0,1]$
 and $\theta \in \R$ in such a way that $\mu_1(T_\theta f) = 0$ ($\theta \in \R$). 

 Thus the harmonic function $\phi_f(z) - \phi_f(1/\overline{z})$ of $z \in \C \setminus \T$ is continuously extended to $\overline{\C}$
 so that it vanishes on $\T$, whence it vanishes identically by the maximum principle on its real and imaginary parts.

 By choosing approximate delta functions for $f$ and taking its limit,
 we have $\phi(z) = \phi(1/\overline{z})$, which together with $\phi = \phi^*$ implies that $\phi(z)$ is an imaginary constant,
 i.e., $\phi(z) = \text{Im}\,f(0) = 0$. 
 
 In this way the representing measure of $\phi$ is expressed as the limit of an approximating measure $\text{Re}\, \phi(re^{it})\, dt$
 associated with an inner circle $|z| = r < 1$ in $D$ (the inversion formula of disk version).
 In other words, the representing measure of $\phi = \phi^*$ exists if and only if
 the functional limit $\lim_{r \uparrow 1} \text{Re} \phi(r e^{it})\, dt$ exists as a signed Radon measure on $C(\T)$ with respect to
 the angle parameter $e^{it}$ of $\T$.

 By the decomposition $\phi = (\phi + \phi^*)/2 + i(\phi - \phi^*)/2i$, the following is now in our hands. 

 \begin{Theorem}\label{circle}
 A holomorphic function $\phi$ on $\overline{\C} \setminus \T$
 admits a representing measure on $\T$ if and only if
 \[
   \lim_{r \uparrow 1} \frac{1}{2} (\phi(r e^{it}) - \phi(e^{it}/r))\, dt
 \]
 exists as a weak* limit of complex Radon measures on $C(\T)$.

 Moreover, if this is the case, the representing measure of $\phi$ is given by the above limit. 
\end{Theorem}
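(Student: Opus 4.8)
The plan is to reduce the general complex-measure statement to the self-adjoint case already treated in the discussion preceding the theorem, and then invoke that work verbatim. Writing $\phi = \phi_1 + i\phi_2$ with $\phi_1 = (\phi + \phi^*)/2$ and $\phi_2 = (\phi - \phi^*)/2i$, both $\phi_1$ and $\phi_2$ satisfy $\phi_j^* = \phi_j$. Since the defining kernel for the representing measure is $\C$-linear in $\mu$ and the $*$-operation corresponds to complex conjugation of the measure (as recorded in the two displayed formulas for $\phi^*$ in Section 1), $\phi$ admits a representing measure $\mu$ if and only if $\phi_1$ admits $(\mu + \overline\mu)/2$ and $\phi_2$ admits $(\mu - \overline\mu)/2i$, i.e.\ if and only if both self-adjoint parts admit representing measures. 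I would state this linearity reduction explicitly as the first step.

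Second, I would apply the already-established equivalence for the self-adjoint case. The running argument has shown that a holomorphic $\phi = \phi^*$ on $\overline\C \setminus \T$ admits a representing measure if and only if the signed measures
\[
  \mu_r(dt) = \tfrac{1}{2}\bigl(\phi(re^{it}) - \phi(e^{it}/r)\bigr)\, dt
\]
converge weak* as $r \uparrow 1$ to a signed Radon measure, which is then the representing measure. Applying this to $\phi_1$ and $\phi_2$ separately gives convergence of the corresponding $(\mu_1)_r$ and $(\mu_2)_r$; since $\mu_r = (\mu_1)_r + i(\mu_2)_r$ by linearity of the defining expression in $\phi$, the complex measures $\mu_r$ converge weak* precisely when both real and imaginary parts do. Conversely, weak* convergence of the complex $\mu_r$ forces convergence of its self-adjoint components (one recovers $(\mu_1)_r$ and $(\mu_2)_r$ by symmetrizing $\phi$), so the two conditions match up. This yields both directions of the equivalence and identifies the limit as the representing measure.

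The one place requiring a little care, rather than a genuine obstacle, is confirming that $\mu_r$ really decomposes as $(\mu_1)_r + i(\mu_2)_r$ and that the self-adjoint parts of the complex $\mu_r$ are exactly the real and imaginary parts of the limit. This is a direct computation: replacing $\phi$ by $\phi^*(z) = -\overline{\phi(1/\overline z)}$ sends $\phi(re^{it})$ to $-\overline{\phi(e^{it}/r)}$ and $\phi(e^{it}/r)$ to $-\overline{\phi(re^{it})}$, so $\mu_r$ is carried to $\overline{\mu_r}$ under $\phi \mapsto \phi^*$, consistently with the complex-conjugation rule for representing measures. Hence forming $\phi_1, \phi_2$ commutes with forming $\mu_r$, and the decomposition is automatic. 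I expect no serious difficulty here; the substantive analytic content—the weak* convergence furnishing the Herglotz--Riesz--Nevanlinna representation—has already been dispatched in the self-adjoint case, and the theorem is essentially its packaging under the splitting $\phi = (\phi+\phi^*)/2 + i(\phi-\phi^*)/2i$.
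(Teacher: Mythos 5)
Your proposal is correct and takes essentially the same route as the paper: the paper settles the self-adjoint case in the preceding discussion (the forward direction via Jordan decomposition and Herglotz--Riesz--Nevanlinna, the converse via the Banach--Steinhaus and maximum-principle argument) and then states Theorem~\ref{circle} exactly through the splitting $\phi = (\phi+\phi^*)/2 + i(\phi-\phi^*)/2i$. Your verification that the $*$-operation carries $\mu_r$ to $\overline{\mu_r}$, so that forming the self-adjoint parts commutes with forming the approximating measures, is precisely the compatibility the paper relies on.
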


\section{Boundary Limits by  Lines}
We next examine the approximating process in terms of the holomorphic function $\varphi(w)$ on the imaginary domain $\C \setminus \R$
through a M\"obius transform $w = i(1-z)/(1+z)$. 
First recall that the inversion formula for the representing measure $\lambda$ of $\varphi$ takes
the following form\footnote{This is not just a rewriting of Theorem~\ref{circle} but an immediate consequence of the classical case.
For the atomic mass formula, see also \cite[Appendix A]{HY}.}:
\[
  \lambda(du) = \lim_{v \to +0} \frac{\varphi(u+iv) - \varphi(u-iv)}{2\pi i(1+u^2)}\, du
\]
as a complex Radon measure on $\R$, i.e.,
\[
  \int_\R f(u)\, \lambda(du) = \lim_{v \to +0} \int_{-\infty}^\infty f(u) \frac{\varphi(u+iv) - \varphi(u-iv)}{2\pi i(1+u^2)}\, du
\]
for each continuous function $f$ on $\overline{\R}$ (see Appendix~C) and 
\[
\pm i \lambda(\{\infty\}) = \lim_{v \to +\infty} \frac{\varphi(\pm iv)}{v}.  
\]
Furthermore, related to atomic parts of $\lambda$ in $\R$, we notice that 
the holomorphic function $\varphi$ having a representing measure satisfies Vladimirov's estimate (Theorem~\ref{Vl}) globally on $\C \setminus \R$.

To rephrase this property in a local fashion, we introduce several terminology.
By a \textbf{boundary neighborhood} of $s \in \R$, we shall mean $U \setminus \R$ with $U$ an open neighborhood of $s$ in $\C$.
Boundary neighborhoods are further localized and $U\cap \C_\pm$ is called a boundary neghborhood of $s\pm i0$.
Here $\C_\pm$ denotes the upper/lower half plane. 
We say that a holomorphic function defined on a boundary neighborhood $U \cap \C_\pm$ of $s \pm i0$
\textbf{behaves simply}\footnote{Our usage of `simple' here is in accordance with that of zeros and poles.} at $s\pm i0$
if $(\text{Im}\,z) \varphi(z)$ is bounded as a function of $z \in U' \cap \C_\pm$ for a small neighborhood $U'$ of $s$.

When $\varphi$ is defined on a boundary neighborhood of $s$ and $\varphi$ behaves simply at both of $s\pm i0$,
we say that $\varphi$ behaves simply at $s$.
For a holomorphic function $\varphi$ defined on $\C_\pm \setminus K$ with $K$ a compact subset of $\C \setminus \R$,
$\varphi$ is said to behave simply at $\infty \pm i0$ if the inversion
$\widetilde\varphi(z) \equiv \varphi(-1/z)$ of $\varphi$ behaves simply at $z = 0 \pm i0$, i.e., if
$\varphi(z) \text{Im}\,z/|z|^2$ is bounded for a sufficiently large $z \in \C_\pm$.

For a subset $S \subset \overline{\R}$, a boundary neighborhood of $S \pm i0$ (resp.~$S$) is defined to be $U \cap \C_\pm$
(resp.~$U \setminus \overline{\R}$) with $U$ an open neighborhood of $S$ in $\overline{\C}$ and
a holomorphic function $\varphi$ defined on a boundary neighborhood of $S\pm i0$ (resp.~$S$)
is said to behave simply on $S\pm i0$ (resp.~$S$)
if $\varphi$ behaves simply at $s \pm i0$ (resp.~$s$) for each $s \in S$.
By a compactness argument, a holomorphic function $\varphi$ defined on a boundary neighborhood of $S\pm i0$ (resp.~$S$) behaves simply on
$S \pm i0$ (resp.~$S$) if and only if
\[
  \frac{\text{Im}\, z}{1 + |z|^2} \varphi(z)
\]
is bounded on $U'\cap C_\pm$ (resp.~$U' \setminus{\R}$) for a small open neighborhood $U'$ of $S$ in $\overline{\C}$. 

By Appendix B, if $S$ is an open subset of $\R$ and $\varphi$ behaves simply on $S\pm i0$, the limit
(called the  \textbf{boundary limit} of $\varphi$ on $S$)
\[
  \varphi(x\pm i0) = \lim_{y \to +0} \varphi(x \pm iy)
\]
exists as a Schwartz distribution in $S$, i.e., for any $f \in C_c^\infty(S)$,
\[
  \lim_{y \to +0} \int_S f(x) \varphi(x \pm iy)\, dx
\]
exists. 


Distributional boundary limits can be also considered on the circle boundary.
This is simply achieved by using a periodic function $e^{iz}$ of period $2\pi$ as a (reverse) coordinate of $\C^\times$: In terms of
the polar expression $re^{it}$ of $e^{iz}$, $\text{Im}\,z = -\log r$ and the condition $\text{Im}\,z \to \pm 0$ is
equivalent to $r \to 1 \mp 0$ in such a way that $-(\log r)/(1-r) \to 1$ as $r \to 1$. 
Thus, by the polar coordinates $(r,t)$ of $e^{iz} \in \C^\times$ around $e^{i\theta} \in \T$, we see that 
$\phi(e^{iz})$ behaves simply at $z=\theta+i0$ (resp.~$z = \theta-i0$) if and only if
$(1-r) \phi(re^{it})$ is bounded on $\theta-\delta \leq t \leq \theta + \delta$ and $e^{-\delta} \leq r < 1$ (resp.~$1 < r \leq e^\delta$)
for some $\delta>0$. Moreover, if this is the case,
\[
  \lim_{r \updownarrow 1} \phi(re^{it}) = \lim_{\tau\updownarrow 0} \phi(e^{it - \tau})
  \equiv \phi(e^{it \mp 0})
\]
exists as a distribution of $t$ near the point $\theta$.

Returning to the inversion formula of representing measures, 
the big difference between these two versions in approximation is that the convergence at $\infty$ is dealt with separately for $\varphi$,
whereas it is overall for $\phi$.
This is because the approximation for $\varphi$ is based on lines $\R + iv$ ($v > 0$)
in $\C_+$ which constitute circles tangential to the boundary
$\overline{\R}$, whereas an inner circle in the disk version never touches the boundary $\T$.
Thus the approximating processes themselves are different in two boundaries and it is not clear at all whether two approximations 
give rise to the same measure or not, even if their limits are assumed to exist.
In fact, a direct linkage between limit measures for different boundaries
is difficult to estabslish and we shall circumvent it by relating boundary distributions instead of representing measures. 


\section{Boundary Distributions}
To begin with, let the inner circle $z = r e^{it}$ ($-\pi \leq t \leq \pi$ with $\pm \pi$ identified
in accordance with $e^{\pm i\pi} = -1$) be M\"obius-transformed to a circle $C_r$ in $\C_+$ by
 \[
   w_r(t) = i \frac{1-r e^{it}}{1 + r e^{it}}
 \]
 so that
 \[
   \frac{1-r}{1+r} \leq \text{Im}\, w_r(t) \leq \frac{1+r}{1-r}
 \]
 with extremal values attained at $w_r = i (\frac{1-r}{1+r})^{\pm 1}$. 
 Thus the center of $C_r$ is located at
 \[
   \frac{i}{2} \left(\frac{1-r}{1+r} + \frac{1+r}{1-r}\right)
   = i \frac{1+r^2}{1-r^2}
 \]
 with its radius equal to 
 \[
   \frac{1}{2} \left( \frac{1+r}{1-r} - \frac{1-r}{1+r} \right) = \frac{2r}{1-r^2}. 
 \]

 In terms of a new variable $s = \tan(t/2) = w_1(t) = i(1-e^{it})/(1+e^{it})$ parametrizing $\R \subset \C$,
 we have
  \[
    \phi(re^{it}) = -i \varphi\left(\frac{(1+r)s + i(1-r)}{1+r - i(1-r)s}\right),
    \quad
 dt = \frac{2}{1+s^2} ds.
 \]
Remark that $e^{it} = (1+is)/(1-is)$ and
 \[
   s = \frac{(1+r)w_r(t) - i(1-r)}{1+r + i(1-r)w_r(t)}
   \iff
   w_r(t) = \frac{(1+r)s + i(1-r)}{1+r - i(1-r)s} \equiv s_r. 
 \]
 Here the notation $s_r$ indicates that it is an $r$-deformation of the real variable $s$ into complex one and
 $s_r$ ($s \in \R$) traces the circle $C_r$ so that $0_r = i(1-r)/(1+r)$ and $(\pm\infty)_r = i(1+r)/(1-r)$. 
 Denote the real and imaginary parts of $s_r$ by $u_r(s)$ and $v_r(s)$ respectively, which satisfy
\[
  \frac{u_r(s)}{s} = \frac{4r}{(1+r)^2 + (1-r)^2s^2} < 1,
  \quad 
 v_r(s) = \frac{(1-r^2)(1+s^2)}{(1+r)^2 + (1-r)^2s^2}
\]
and, by circle geometry, $v_r(s)$ is monotone increasing on $s \geq 0$,
whereas $u_r(s)$ is monotone increasing on a closed interval $[-a,a]$ ($a\geq 0$) if and only if $a \leq 2r/(1-r^2)$.

\begin{Lemma}\label{simples}
  Let $\varphi$ be a holomorphic function defined on a boundary neighborhood of $s_0 \pm i0$ ($s_0 \in \overline{\R}$), 
  $\phi(z) = -i\varphi(w)$ be the M\"obius transform of $\varphi$ and set $t_0 = 2\arctan s_0 \in (-\pi,\pi]$.
  
  Then $\phi(e^{iz})$ behaves simply at $z = t_0 \pm i0$ if and only if $\varphi(w)$ behaves simply at $w = s_0 \pm i0$.  
\end{Lemma}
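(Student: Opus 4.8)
The plan is to translate both occurrences of ``behaves simply'' into their defining boundedness conditions and then to observe that the two conditions involve the \emph{same} holomorphic function multiplied by two comparable positive weights. Writing $z$ for the logarithmic coordinate (so that $\phi(e^{iz})$ is examined at $z = t_0 \pm i0$), I would set
\[
  w(z) = i\frac{1 - e^{iz}}{1 + e^{iz}},
\]
the composite of $z \mapsto e^{iz}$ with the M\"obius map, so that $\phi(e^{iz}) = -i\,\varphi(w(z))$. On the real axis this composite is $w(z) = \tan(z/2)$, with $w(t_0) = s_0$ and $w'(z) = \frac12\sec^2(z/2) > 0$; in particular $z \mapsto w(z)$ is conformal near $z = t_0$, carries a real neighborhood of $t_0$ into $\R$, and sends $\C_\pm$ into $\C_\pm$. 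This last fact matches the two signs in the statement, and by conjugation symmetry it suffices to treat the case of $+$.

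By definition, $\phi(e^{iz})$ behaves simply at $z = t_0 + i0$ if and only if $(\text{Im}\, z)\,\phi(e^{iz})$ is bounded for $z$ near $t_0$ in $\C_+$, while $\varphi$ behaves simply at $w = s_0 + i0$ if and only if $(\text{Im}\, w)\,\varphi(w)$ is bounded for $w$ near $s_0$ in $\C_+$. Since $\varphi(w) = i\,\phi(e^{iz})$, these two products are related by
\[
  (\text{Im}\, w)\,\varphi(w) = i\,\frac{\text{Im}\, w(z)}{\text{Im}\, z}\,(\text{Im}\, z)\,\phi(e^{iz}),
\]
and the factor $i$ is harmless for boundedness. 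Hence the whole lemma reduces to showing that the weight $\text{Im}\, w(z)/\text{Im}\, z$ is bounded above and below by positive constants on a full two-dimensional boundary neighborhood of $t_0$ (not merely along the boundary curve).

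For finite $s_0$ this is immediate. Since $w$ has real Taylor coefficients at $t_0$ by Schwarz reflection, I would write $\text{Im}\, w(z) = \frac12\int_{-\eta}^{\eta} w'(\text{Re}\, z + it)\, dt$ with $\eta = \text{Im}\, z$, so that $\text{Im}\, w(z)/\text{Im}\, z \to w'(t_0) = (1 + s_0^2)/2 > 0$ as $z \to t_0$, uniformly on a small neighborhood. Equivalently, one may read this off the explicit formula for $v_r(s) = \text{Im}\, w_r$ recorded above together with $\text{Im}\, z = -\log r \sim 1 - r$, giving $v_r(s)/(-\log r) \to (1 + s^2)/2$. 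Either route makes the two boundedness conditions equivalent near $t_0$.

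The one point requiring separate care, and the only genuine obstacle, is $s_0 = \infty$, i.e.\ $t_0 = \pi$, where $w(z) \to \infty$ and the comparison above degenerates. There the line-side condition is by definition the simple behavior of the inversion $\widetilde\varphi(\widetilde w) = \varphi(-1/\widetilde w)$ at $\widetilde w = 0$, so I would replace $w$ by $\widetilde w = -1/w = i(1 + e^{iz})/(1 - e^{iz})$. A short expansion near $z = \pi$ gives $\widetilde w = (z - \pi)/2 + O((z-\pi)^2)$, a conformal map sending a real neighborhood of $\pi$ into $\R$ and $\C_\pm$ into $\C_\pm$, with $\text{Im}\,\widetilde w/\text{Im}\, z \to 1/2 > 0$. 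Running the previous paragraph with $\widetilde w$ in place of $w$ then yields the equivalence at $t_0 = \pi$ as well, completing the argument for both signs.
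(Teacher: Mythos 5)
Your proof is correct, but its execution differs from the paper's in a way worth spelling out. Both arguments hinge on expressing one simple-behavior quantity as the other times a weight and showing the weight is bounded above and below; the difference is in which normalization is used and how the point at infinity is handled. The paper works with the sphere-normalized criterion (the compactness remark preceding the lemma: simple behavior at any point of $\overline{\R}$, including $\infty$, is boundedness of $\frac{\text{Im}\,w}{1+|w|^2}\,\varphi(w)$) and rests on a single exact computation: writing $s_r = w_r(t)$ for the M\"obius image of $re^{it}$ (with $s=\tan(t/2)$) and $v_r(s)=\text{Im}\,s_r$, one has
\[
  (1-r)\,\phi(re^{it})
  = -i\,\frac{1-r}{v_r(s)}\bigl(1+|s_r|^2\bigr)\cdot\frac{\text{Im}\,s_r}{1+|s_r|^2}\,\varphi(s_r),
  \qquad
  \frac{1-r}{v_r(s)}\bigl(1+|s_r|^2\bigr) = \frac{2(1+r^2)}{1+r},
\]
so the conversion factor is literally independent of $s$ and bounded above and below for $r$ near $1$; this settles every $s_0\in\overline{\R}$, infinity included, with no case distinction. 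You instead work with the raw weight $\text{Im}\,w$, prove by a soft argument (averaging $w'$ over vertical segments, or the asymptotics of $v_r(s)$) that $\text{Im}\,w(z)/\text{Im}\,z \to w'(t_0)=(1+s_0^2)/2>0$ locally uniformly, and are then forced to treat $s_0=\infty$ separately via the inversion $\widetilde w=-1/w$ — correctly and necessarily so, since your local weight $(1+s_0^2)/2$ degenerates as $s_0\to\infty$, and since the paper defines simple behavior at $\infty\pm i0$ through that inversion in the first place. What your route buys: it is elementary, uses no M\"obius-specific algebra, and applies verbatim to any coordinate change that is holomorphic across the real axis, real on it, and has nonvanishing derivative there. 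What the paper's route buys: an exact, $s$-independent factor that makes the equivalence uniform over the whole of $\overline{\R}$ and eliminates the case analysis at infinity.
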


\begin{proof}
  In the expression
  \[
    (1-r) \phi(re^{it}) = -i \frac{1-r}{v_r(s)} (1 + |s_r|^2) \frac{\text{Im}\,s_r}{1+|s_r|^2} \varphi(s_r),
  \]
  the identity
  \[
   1 + |s_r|^2 = 1 + \frac{(1+r)^2s^2 + (1-r)^2}{(1+r)^2 + (1-r)^2s^2} = \frac{2(1+r^2)(1+s^2)}{(1+r)^2 + (1-r)^2s^2}
  \]
  is used to see that
  \[
    \frac{1-r}{v_r(s)} (1 + |s_r|^2) = 2 \frac{1+r^2}{1+r}
  \]
  and its reciprocal are bounded if so is $r>0$. 
\end{proof}

\begin{Corollary}
  A holomorphic function $\varphi$ defined on a boundary neighborhood of $\overline{\R}$ behaves simply on $\overline{\R}$ if and only if
  $\phi(e^{iz})$ behaves simply on $[-\pi,\pi]$. Moreover, if this is the case, 
  boundary limits $\phi(e^{it \pm 0})$ exist as periodic distributions of period $2\pi$ on $\R$, i.e., as distributions on $\T$. 
\end{Corollary}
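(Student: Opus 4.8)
The plan is to reduce the equivalence to the pointwise statement of Lemma~\ref{simples}. The map $s_0 \mapsto t_0 = 2\arctan s_0$ is a bijection of $\overline{\R}$ onto the circle obtained from $[-\pi,\pi]$ by identifying the endpoints $\pm\pi$, the value $s_0 = \infty$ corresponding to $t_0 = \pm\pi$ (i.e.\ $e^{it_0} = -1$). Since behaving simply on $\overline{\R}$ (resp.\ on $[-\pi,\pi]$) means, by definition, behaving simply at \emph{every} point $s_0 \in \overline{\R}$ (resp.\ at every point $t_0$), the claimed equivalence follows by applying Lemma~\ref{simples} at each $s_0$ and letting $s_0$ range over $\overline{\R}$. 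The only point deserving care is $s_0 = \infty$, where ``behaves simply'' is defined through the inversion $\widetilde\varphi(z) = \varphi(-1/z)$; but this case is already subsumed in Lemma~\ref{simples}, which is stated for all $s_0 \in \overline{\R}$.

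For the moreover part I would suppose $\phi(e^{iz})$ behaves simply on $[-\pi,\pi]$ and invoke the circle-coordinate boundary-limit result recorded before Lemma~\ref{simples} (Appendix~B read in the $e^{iz}$ coordinate): behaving simply at $z = \theta \pm i0$ yields a boundary limit $\phi(e^{it \mp 0}) = \lim_{r \updownarrow 1} \phi(re^{it})$ existing as a distribution of $t$ on a neighbourhood of $\theta$. This produces, for each $\theta$, a locally defined distribution, and these are mutually compatible on overlaps because they all arise from the single function $\phi(e^{iz})$ through the same $r \updownarrow 1$ limiting procedure. I would then glue them: as $\T$ is compact, finitely many such neighbourhoods cover it, and a subordinate partition of unity assembles the local limits into one distribution on $\T$. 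Since $\phi(e^{iz})$ has period $2\pi$ in $z$, the result is automatically a $2\pi$-periodic distribution on $\R$, which is precisely a distribution on $\T$.

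The main obstacle is the global assembly step rather than the pointwise analysis. Beyond Lemma~\ref{simples} one must justify that the locally defined boundary limits genuinely glue—that is, use the sheaf property of distributions together with the compactness of $\T$—and check that no obstruction arises at the identified endpoints $\pm\pi$, equivalently at $s_0 = \infty$. Because the local limits are restrictions of a single limiting process, compatibility on overlaps is automatic, so I expect the partition-of-unity argument to deliver the desired periodic distribution without further difficulty.
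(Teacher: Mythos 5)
Your proof is correct and takes essentially the route the paper intends: the equivalence is just Lemma~\ref{simples} applied pointwise at each $s_0 \in \overline{\R}$ (including $s_0 = \infty$, which the lemma covers via the inversion), and the periodic boundary distribution is obtained by gluing the local limits of Appendix~B read in the $e^{iz}$ coordinate, where compatibility on overlaps is indeed automatic since every local limit is the same $r \to 1$ limit of $\phi(re^{it})\,dt$. The paper states the corollary without a written proof precisely because this pointwise-plus-partition-of-unity argument is immediate, so your proposal matches it, merely spelling out the gluing step.
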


In view of the above corollary, naturally raised is the question of how limit distributions $\varphi(x \pm i0)$, $\widetilde\varphi(x \pm i0)$
on $\R$ and $\phi(e^{it \mp 0})$ on $\T$ are related, which shall be answered in what follows
under the assumption that $\varphi$ behaves simply on $\overline{\R} \pm i0$.

To make the comparison explicit, choose a closed interval $[-a,a]$ ($a>0$) in $\R$ for the $s$-variable and
consider a linear functional of $f \in C_0(-a,a)$ 
($C_0$ stands for the set of continuous functions vanishing at infinity or on the boundary)
defined with the choice $\alpha = 2 \arctan(a)$ by 
\[
 \int_{-\alpha}^\alpha f(\tan\frac{t}{2}) \phi(r e^{it})\, dt = - i\int_{-a}^a f(s) \varphi(s_r) \frac{2}{1+s^2}\, ds, 
\]
whose limit for $r \uparrow 1$ should be compared with the line integral limit 
\[
 - i  \lim_{y \downarrow 0} \int_{-a}^a f(s) \varphi(s+iy) \frac{2}{1+s^2}\, ds. 
\]

More precisely, under the simple behavior of relevant functions, 
we try to show that these limits coincide.
To dispose of this problem, we shall link $r$ with $y$ so that $r \uparrow 1$ is equivalent to $y \downarrow 0$ and 
\[
  \lim \int_{-a}^a f(s) \Bigl( \varphi(s_r) - \varphi(s+iy)\Bigr)\, \frac{ds}{1+s^2} = 0. 
\]

To this end, we further restrict $f$ to be analytic on $[-a,a]$, i.e., $f$ is extended to an analytic function in a neighborhood of $[-a,a]$
so that $f(\pm a) = 0$. 
Since $s_r$ converges to $s$ for the limit $r \uparrow 1$, 
\[
 f_r(w) = f\left(\frac{(1+r)w - i(1-r)}{1+r + i(1-r)w}\right)
\]
is an analytic function of $w$ in a neighborhood of $[-a,a]$ for $r \approx 1$
in such a way that $f_r(s_r) = f(s)$ for $s \in [-a,a]$. 

Then we have an expression
\[
  \int_{-a}^a f(s) \varphi(s_r)\, \frac{ds}{1+s^2} = \int_{A_r} f_r(w) \varphi(w) \frac{dw}{1+w^2}, 
\]
where $A_r$ denotes the arc $s_r = u_r(s) + i v_r(s)$ along the circle $C_r$ from $s=-a$ to $s=a$, i.e., starting from
\[
(-a)_r = \frac{-(1+r)a + i(1-r)}{1+r + i(1-r)a} = \frac{-4ar + i(1+a^2)(1-r^2)}{(1+r)^2 + a^2(1-r)^2}
\]
and ending at 
\[
(a)_r = \frac{(1+r)a + i(1-r)}{1+r - i(1-r)a} = \frac{4ar + i(1+a^2)(1-r^2)}{(1+r)^2 + a^2(1-r)^2}.
\]

\begin{figure}[h]
  \centering
 \includegraphics[width=0.4\textwidth]{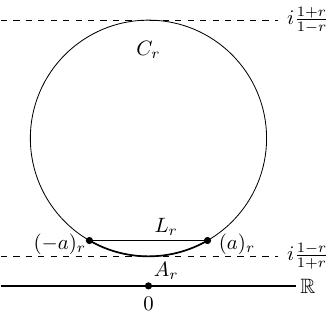}
 \caption{Arc to Line}
\end{figure}

Let $L_r$ be the line segment drawn from $(-a)_r$ to $(a)_r$. By the Cauchy integral theorem, we then have
\begin{align*}
 \int_{-a}^a f(s) \varphi(s_r)\, &\frac{ds}{1+s^2}
  = \int_{A_r} f_r(w) \varphi(w) \frac{dw}{1+w^2}
    = \int_{L_r} f_r(w) \varphi(w) \frac{dw}{1+w^2}\\
  &= \int_{-u_r(a)}^{u_r(a)} f_r(x + iv_r(a)) \varphi(x + iv_r(a)) \frac{dx}{1+(x+iv_r(a))^2}, 
\end{align*}
which suggests the choice $y = v_r(a)$ to link $r$ with $y$.

We shall derive an explicit formula of the limit 
\[
 \lim_{r \uparrow 1} \int_{L_r} f_r(w) \varphi(w) \frac{dw}{1+w^2}
\]
in a series of lemmas. 

To simplify the notation, introduce
\[
  g_r(x) = \frac{f_r(x + iv_r(a))}{ 1 + (x + iv_r(a))^2}, 
\]
which is an analytic function of $x$ in a neighborhood of $[-a,a]$ in $\R$ with an analytic parameter $r \approx 1$
satisfying 
\[
  \lim_{r \to 1} g_r(x) = \frac{f(x)}{1+x^2} \equiv g(x) 
\]
uniformly on $x$ in a neighborhood of $[-a,a]$.

\begin{Lemma} We have $u_r(a) \uparrow a$ as $r \uparrow 1$ and 
  \[
    \lim_{r \uparrow 1} \int_{[-a,a] \setminus (-u_r(a),u_r(a))} g_r(x) \varphi(x + iv_r(a))\, dx = 0.  
  \]
\end{Lemma}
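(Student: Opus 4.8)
The plan is to reduce the statement to the two explicit formulas for $u_r(s)$ and $v_r(s)$ displayed just before the lemma, and then to exploit the fact that the shrinking of the excluded set outpaces the blow-up of $\varphi$ along the horizontal line $L_r$.

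First I would settle the monotone convergence $u_r(a) \uparrow a$. Writing $D = (1+r)^2 + (1-r)^2 a^2$ so that $u_r(a) = 4ar/D$, the one-line identity $D - 4r = (1-r)^2(1+a^2)$ gives
\[
  a - u_r(a) = \frac{a(1-r)^2(1+a^2)}{D},
\]
which is nonnegative and tends to $0$ as $r \uparrow 1$, since $D \to 4$. Monotonicity can be read off by differentiating in $r$, where one finds $\frac{d}{dr}u_r(a) = 4a(D - rD')/D^2$ with $D - rD' = (1-r^2)(1+a^2) > 0$; alternatively, after the substitution $t = 1-r$ one checks directly that $a - u_r(a)$ is increasing in $t$.

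For the integral, the decisive comparison is between the length of the domain and the growth of the integrand. Since $\varphi$ behaves simply near $[-a,a]$, there is a constant $C$ with $(\text{Im}\,z)|\varphi(z)| \le C$ throughout a fixed boundary neighborhood of $[-a,a]$; because $v_r(a) \to 0$, the points $x + iv_r(a)$ lie in this neighborhood for $r$ close to $1$ and $x$ near $[-a,a]$, so $|\varphi(x+iv_r(a))| \le C/v_r(a)$ there. The factor $g_r$ converges uniformly by the remark preceding the lemma, hence is bounded by some $M$. Dividing the two displayed formulas yields the key identity
\[
  \frac{a - u_r(a)}{v_r(a)} = \frac{a(1-r)^2(1+a^2)}{(1-r^2)(1+a^2)} = \frac{a(1-r)}{1+r},
\]
and since the excluded set $[-a,a] \setminus (-u_r(a),u_r(a))$ has total length $2(a - u_r(a))$, the integral is bounded in absolute value by
\[
  2MC\,\frac{a - u_r(a)}{v_r(a)} = 2MC\,\frac{a(1-r)}{1+r} \longrightarrow 0
  \quad (r \uparrow 1).
\]

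The main obstacle is exactly this balance: the simple-behavior estimate forces the integrand to diverge like $1/v_r(a)$, while the measure $a - u_r(a)$ of the excluded set vanishes, so the cancellation is not automatic. It works because $a - u_r(a)$ vanishes to second order in $1-r$ whereas $v_r(a)$ vanishes only to first order, supplying the extra factor $(1-r)$ that drives the product to zero. The only point requiring care is that the bound $(\text{Im}\,z)|\varphi(z)| \le C$ be available uniformly on the relevant shrinking neighborhood of $[-a,a]$, which is precisely what the standing hypothesis that $\varphi$ behaves simply on $\overline{\R} \pm i0$ provides.
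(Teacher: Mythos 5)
Your proof is correct and follows essentially the same route as the paper: the uniform bound on $g_r$, the simple-behavior estimate $|\varphi(x+iv_r(a))| \le C/v_r(a)$, and the key identity $(a-u_r(a))/v_r(a) = a(1-r)/(1+r) \to 0$. Your explicit verification that $u_r(a)$ is increasing in $r$ (via $D - rD' = (1-r^2)(1+a^2) > 0$) is a small addition the paper leaves implicit, and your factor $2$ correctly accounts for both components of the excluded set, which the paper handles by symmetry.
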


\begin{proof}
  By uniform convergence of $g_r$ to $g$, $|g_r(x)| \leq G$ for $x \in [-a,a]$ and $r \approx 1$, and 
  the simple behavior of $\varphi$ at $a$ means $v_r(a) |\varphi(x + iv_r(a))| \leq C$ for $x \in [-a,a]$ and $r \approx 1$, whence 
  \[
    \int_{u_r(a)}^a  |g_r(x) \varphi(x + iv_r(a)|\, dx
    \leq C  \int_{u_r(a)}^a  \frac{|g_r(x)|}{v_r(a)} \, dx
    \leq CG  \frac{a - u_r(a)}{v_r(a)} 
  \]
  and the assertion follows from
  \[
    a - u_r(a) = \frac{a(1+a^2)(1-r)^2}{(1+r)^2 + a^2(1-r)^2}
  \]
  and then 
  \[
    \frac{a - u_r(a)}{v_r(a)} = a\frac{1-r}{1+r}. 
  \]
\end{proof}

Thus our target is reduced to the integral
\[
  \int_{-a}^a g_r(x) \varphi(x + iv_r(a))\, dx.
\]
For further reduction of $g_r$ to $g$, we use the fundamental formula of calculus of second order in the form 
\[
  g_r(x) = g(x) + (r-1) \left.\frac{\partial g_r(x)}{\partial r}\right|_{r=1}
  + \int_1^r (r-t) \frac{\partial^2 g_t(x)}{\partial t^2}\, dt. 
\]

\begin{Lemma}
  \[
  \lim_{r \uparrow 1} \int_{-a}^a \int_1^r (r-t) \frac{\partial^2 g_t(x)}{\partial t^2}\, dt \varphi(x + iv_r(a))\, dx = 0. 
  \]
\end{Lemma}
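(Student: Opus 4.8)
The plan is to bound the double integral by a product of three factors: a uniform-in-$x$ bound on the Taylor-remainder integral $\int_1^r (r-t)\,\partial_t^2 g_t(x)\,dt$, the simple-behavior bound on $\varphi(x+iv_r(a))$, and the length of the interval $[-a,a]$; and then to check that the resulting combination tends to $0$ by invoking the explicit formula for $v_r(a)$ recorded earlier.

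First I would estimate the inner integral. Since the parameter $r$ enters $g_r(x)$ analytically and $g_r \to g$ uniformly near $[-a,a]$, the function $g_t(x)$ is jointly analytic in $(x,t)$ on a neighborhood of $[-a,a]\times\{1\}$, so $\partial_t^2 g_t(x)$ is continuous there and hence bounded, say $|\partial_t^2 g_t(x)| \le M$ for $x \in [-a,a]$ and $t \in [r_0,1]$ with some fixed $r_0 < 1$. Rewriting $\int_1^r (r-t)\,\partial_t^2 g_t(x)\,dt = \int_r^1 (t-r)\,\partial_t^2 g_t(x)\,dt$ and using $t-r \ge 0$ on $[r,1]$, I obtain the uniform bound
\[
\left| \int_1^r (r-t)\frac{\partial^2 g_t(x)}{\partial t^2}\,dt \right| \le M\int_r^1 (t-r)\,dt = \frac{M}{2}(1-r)^2
\]
for all $x \in [-a,a]$ and $r \in [r_0,1)$.

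Next I would invoke the simple behavior of $\varphi$ at $a$, which (exactly as used in the previous lemma) supplies a constant $C$ with $v_r(a)\,|\varphi(x+iv_r(a))| \le C$ for $x \in [-a,a]$ and $r \approx 1$. Combining this with the remainder bound and integrating the resulting constant over $[-a,a]$ gives
\[
\left| \int_{-a}^a \int_1^r (r-t)\frac{\partial^2 g_t(x)}{\partial t^2}\,dt\,\varphi(x+iv_r(a))\,dx \right| \le 2a\cdot\frac{M}{2}(1-r)^2\cdot\frac{C}{v_r(a)} = aMC\,\frac{(1-r)^2}{v_r(a)}.
\]

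Finally I would dispose of the limit using the explicit expression $v_r(a) = (1-r^2)(1+a^2)/\bigl((1+r)^2 + (1-r)^2 a^2\bigr)$, which yields
\[
\frac{(1-r)^2}{v_r(a)} = \frac{(1-r)\bigl((1+r)^2 + (1-r)^2 a^2\bigr)}{(1+r)(1+a^2)} \xrightarrow[r\uparrow 1]{} 0,
\]
and the assertion follows. The step requiring genuine care is the first one, extracting a uniform-in-$x$ bound on $\partial_t^2 g_t$ from the analytic dependence of $g_t$ on its parameter; the conceptual crux, however, is that the quadratic smallness $(1-r)^2$ of the Taylor remainder must beat the order-$(1-r)^{-1}$ blow-up of $\varphi$ forced by simple behavior, which it does because $v_r(a)$ vanishes only to first order in $1-r$, leaving a net positive power of $(1-r)$.
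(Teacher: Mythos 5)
Your proof is correct and follows essentially the same route as the paper: bound the Taylor remainder $\int_1^r (r-t)\,\partial_t^2 g_t(x)\,dt$ uniformly in $x$ by $O((1-r)^2)$, pair it with the simple-behavior bound $v_r(a)\,|\varphi(x+iv_r(a))| \leq C$, and observe that $(1-r)^2/v_r(a) \to 0$ since $v_r(a)$ vanishes only to first order in $1-r$. The only difference is cosmetic: you make explicit (via the formula for $v_r(a)$ and the joint analyticity of $g_t(x)$ in $(x,t)$) the two uniformity claims that the paper merely asserts.
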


\begin{proof}
Since $\displaystyle \frac{\partial^2 g_t(x)}{\partial t^2}$ is bounded for $x \in [-a,a]$ as $t \to 1$, we have 
  \[
    \int_1^r (r-t) \frac{\partial^2 g_t(x)}{\partial t^2}\, dt = O((r-1)^2)
    \quad
    \text{for $r \approx 1$}
  \]
  and hence
  \[
    \lim_{r \uparrow 1} \frac{1}{v_r(a)} \int_1^r (r-t) \frac{\partial^2 g_t(x)}{\partial t^2}\, dt = 0
  \]
  uniformly on $x \in [-a,a]$, which is combined with the uniform boundedness of $v_r(a) \varphi(x+iv_r(a))$
  to obtain the assertion. 
\end{proof}

The whole reduction is now boiled down to the boundedness of
\[
  \int_{-a}^a \left.\frac{\partial g_r(x)}{\partial r}\right|_{r=1} \varphi(x + iv_r(a))\, dx
\]
as $r \uparrow 1$. Here the test function to be coupled with $\varphi(x+ i v_r(a))$ is given by 

\begin{Lemma}\label{first}
  \[
    \left.\frac{\partial g_r(x)}{\partial r}\right|_{r=1} = i (1+a^2) \frac{xf(x)}{(1+x^2)^2}
    - i \frac{a^2-x^2}{2(1+x^2)} f'(x). 
  \]
\end{Lemma}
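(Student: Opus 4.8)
The plan is to compute $\left.\partial_r g_r(x)\right|_{r=1}$ directly by the chain rule, tracking the two distinct ways in which $r$ enters the definition of $g_r$: explicitly, through the M\"obius parameter inside $f_r$, and implicitly, through the imaginary displacement $v_r(a)$ sitting in the argument $x + iv_r(a)$.

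First I would record the three elementary derivatives at $r=1$. Since
\[
  M_r(w) = \frac{(1+r)w - i(1-r)}{(1+r) + i(1-r)w}
\]
reduces to the identity at $r=1$, differentiating numerator and denominator gives $\left.\partial_r M_r(w)\right|_{r=1} = i(1+w^2)/2$, while $\left.\partial_w M_r(w)\right|_{r=1} = 1$. Differentiating $v_r(a) = (1-r^2)(1+a^2)/\bigl((1+r)^2 + (1-r)^2 a^2\bigr)$ at $r=1$, where the numerator vanishes and the denominator equals $4$, yields $\left.\partial_r v_r(a)\right|_{r=1} = -(1+a^2)/2$. Hence the moving argument $w(r) = x + iv_r(a)$ satisfies $w(1) = x$ and $w'(1) = -i(1+a^2)/2$.

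Next, writing $g_r(x) = f(M_r(w(r)))/(1 + w(r)^2)$, I would apply the quotient rule. The total derivative of the numerator at $r=1$ combines the explicit M\"obius contribution $f'(x)\cdot i(1+x^2)/2$ with the contribution through the moving argument $f'(x)\cdot w'(1)$; because both carry the factor $f'(x)$, they collapse into $\tfrac{i}{2}f'(x)(x^2 - a^2)$, the cancellation $(1+x^2)-(1+a^2)=x^2-a^2$ being the source of the $(a^2-x^2)$ in the claim. The derivative of the denominator is $2w(1)w'(1) = -ix(1+a^2)$. Substituting these into the quotient rule and separating the two resulting fractions reproduces the two terms of the stated identity.

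I do not anticipate a substantive obstacle here: the quantity is a finite-order derivative of functions analytic near $r=1$, so the computation is purely mechanical. The only point requiring care is the bookkeeping, namely neither double-counting nor omitting the implicit $r$-dependence through $v_r(a)$, and recognizing that the explicit and implicit parts of $\partial_r\bigl[f(M_r(w(r)))\bigr]$ must both be evaluated at the common point $x$, with the same factor $f'(x)$, before they can be combined. Once the signs on $w'(1)$ and on $\left.\partial_r M_r(w)\right|_{r=1}$ are kept straight, the stated formula follows.
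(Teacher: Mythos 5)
Your computation is correct, and it is essentially the paper's argument in a tidier dress: the paper proves the lemma by the same direct calculation, organized as first-order Taylor expansions of $v_r(a)$, the M\"obius argument, and the factor $1/(1+(x+iv_r(a))^2)$ with explicit $O((r-1)^2)$ error terms, whereas you differentiate once at $r=1$ via the chain and quotient rules. Your three elementary derivatives ($\left.\partial_r M_r(w)\right|_{r=1}=i(1+w^2)/2$, $\left.\partial_w M_r\right|_{r=1}=1$, $v_r'(1)=-(1+a^2)/2$) and the resulting cancellation $(1+x^2)-(1+a^2)=x^2-a^2$ all check out and reproduce the stated formula exactly.
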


\begin{proof}
  This is just by a straightforward compution. 
  Here are key steps in the process: 
  \begin{align*}
    v_r(a) &= \frac{(1-r^2)(1+a^2)}{(1+r)^2 + (1-r)^2 a^2} = \frac{(1-r^2)(1+a^2)}{(1+r)^2} + O((r-1)^2)\\
           &= (1+a^2) \frac{1-r}{1+r} + O((r-1)^2)\\
           &= \frac{1+a^2}{2} (1-r) (1 + \frac{1-r}{2}) + O((r-1)^2)\\
    &= \frac{1+a^2}{2} (1-r) + O((r-1)^2). 
  \end{align*}
  \begin{align*}
    \frac{1}{1 + (x + iv_r(a))^2}
    &= \frac{1}{1 + (x + i(1+a^2)(1-r)/2)^2} + O((r-1)^2)\\
    &= \frac{1}{1 + x^2 + i(1+a^2)(1-r)x} + O((r-1)^2)\\
    &= \frac{1}{1+x^2} \left( 1 + i(1+a^2)(r-1)\frac{x}{1+x^2} \right) + O((r-1)^2). 
  \end{align*}
  \begin{align*}
    (1+r)&(x + iv_r(a)) - i(1-r)\\
    &= (2 - (1-r))(x + i(1+a^2)(1-r)/2) - i(1-r) + O((r-1)^2)\\
    &= 2x - (1-r)x + i(1+a^2)(1-r) - i(1-r) + O((r-1)^2)\\
    &= 2x - (1-r)x + ia^2(1-r) + O((r-1)^2). 
  \end{align*}
  \begin{align*}
    \frac{1}{1+r + i(1-r)(x + iv_r(a))}
    &= \frac{1}{2 - (1-r) + i(1-r)x} + O((r-1)^2)\\
    &= \frac{1}{2} \left( 1 + \frac{1-r}{2} - i \frac{1-r}{2} x \right) + O((r-1)^2).
  \end{align*}
  \begin{align*}
    &\frac{(1+r)(x + iv_r(a)) - i(1-r)}{1+r + i(1-r)(x + iv_r(a))}\\
    &\qquad= \left( x - \frac{1-r}{2}x + ia^2\frac{1-r}{2} \right) \left( 1 + \frac{1-r}{2} - i \frac{1-r}{2} x \right) + O((r-1)^2)\\
    &\qquad= x + i \frac{1-r}{2} (a^2-x^2) + O((r-1)^2). 
  \end{align*}
  \begin{align*}
    f&\left( \frac{(1+r)(x + iv_r(a)) - i(1-r)}{1+r + i(1-r)(x + iv_r(a))} \right)\\
    &\qquad= f\left( (x + i(1-r) \frac{a^2-x^2}{2} \right) + O((r-1)^2)\\
    &\qquad= f(x) + i(1-r) f'(x) \frac{a^2-x^2}{2} + O((r-1)^2). 
  \end{align*}
  Putting all these together, $g_r(x)$ is up to $O((r-1)^2)$ given by 
  \begin{align*}
  \frac{1}{1+x^2} &\left( 1 - i(1+a^2)(1-r)\frac{x}{1+x^2} \right)
             \left( f(x) + i(1-r) f'(x) \frac{a^2-x^2}{2} \right)\\ 
    &= \frac{f(x)}{1+x^2} + i (1+a^2)(r-1) \frac{xf(x)}{(1+x^2)^2}- i (r-1) f'(x) \frac{a^2-x^2}{2(1+x^2)}. 
  \end{align*}
\end{proof}

To show the boundedness of the integral before the lemma, we need one more step to improve the results on boundary limits in Appendix~B:

Given a closed interval $[a,b]$ in $\R$, consider a continuous function $H \in C[a,b]$ and indefinitely integrate $H$ twice to have
a $C^2$ function $h$ on $[a,b]$ satisfying $h'' = H$.
Since $h$ is determined by $H$ up to a linear function, we normalize $h$ by requiring $h(a) = h(b) = 0$.

Denote by $C_0^2[a,b]$ the set of normalized $h$ for various $H \in C[a,b]$. Clearly $C_0^2[a,b]$ is a linear subspace
of $C_0(a,b)$ and isomorphic to $C[a,b]$ by the correspondence $h \mapsto h''$.

To obtain an explicit description of the normalized $h$ and $h'$. 
consider an intermediate expression $h'(x) = \int_a^x h''(t) - c$ and determine $c$ by $\int_a^b h'(x)\, dx = 0$:
The result is 
\begin{align*}
  h'(x)
  &= \int_a^x h''(t)\, dt - \frac{1}{b-a} \int_a^b (b-t) h''(t)\, dt,\\
  h(x) &= \int_a^x h'(t)\, dt = \int_a^x (x-t)h''(t)\,dt - \frac{x-a}{b-a} \int_a^b (b-t) h''(t)\, dt 
\end{align*}
with norm estimates $  \| h\|_{[a,b]} \leq (b-a) \| h'\|_{[a,b]}$ and 
\begin{align*}
  \| h'\|_{[a,b]}
  &\leq (b-a) \| h''\|_{[a,b]} + \frac{1}{b-a} \| h''\|_{[a,b]} \int_a^b (b-t)\, dt\\
  &= \frac{3}{2} (b-a) \| h''\|_{[a,b]}.
\end{align*}
Here $\| \cdot\|_{[a,b]}$ denotes the uniform norm on $[a,b]$. 

\begin{Remark}
  A similar correspondence works for a higher order $n \geq 2$ by imposing the condition $h^{(m)}(a) = h^{(m)}(b) = 0$ ($0 \leq m < n/2$)
  and the periodic one $h^{(m)}(a) = h^{(m)}(b)$ if $m = n/2$. 
\end{Remark}

\begin{Lemma}\label{second}
  Choose $\delta>0$ small enough so that
  $x + iy$ ($x \in [a,b]$, $0 < y \leq \delta$) belongs to the domain of simple behavior of $\varphi$. 
 
  \begin{enumerate}
 \item 
 For $h \in C_0^2[a,b]$ and $0 < y \leq \delta$, 
  \begin{align*}
    \int_{a}^b h(x) \varphi(x + iy)\, dx
    &= \int_{a}^b h(x) \varphi(x+i\delta)\, dx + i(\delta - y) \int_{a}^b h'(x) \varphi(x+i\delta)\, dx\\
    &+  \int_y^\delta (t-y) \Bigl(h'(b) \varphi(b+it) - h'(a) \varphi(a+it) \Bigr)\, dt\\
     & - \int_{a}^b dx\,  h''(x) \int_y^\delta (t-y)  \varphi(x+it)\, dt. 
  \end{align*}
\item
  As a function of $y \in (0,\delta]$ and $x \in [a,b]$, $y\varphi(x+iy)$ is continuous and bounded so that
  the absolutely convergent improper integral 
  \[
    \int_0^\delta y \varphi(x+iy)\, dy = \lim_{\epsilon \downarrow 0} \int_\epsilon^\delta y \varphi(x+iy)\, dy
  \]
  is bounded and measurable as a function of $x \in [a,b]$.
\item
  For $h \in C_0^2[a,b]$, we have 
  \begin{align*}
    \lim_{y \to +0} \int_{a}^b h(x) \varphi(x + iy)\, dx
    &= \int_{a}^b h(x) \varphi(x+i\delta)\, dx
    + i\delta \int_{a}^b h'(x) \varphi(x+i\delta)\, dx\\
    &+ h'(b) \int_0^\delta t \varphi(b+it)\, dt - h'(a) \int_0^\delta t \varphi(a+it)\, dt\\
    &- \int_{a}^b dx\,  h''(x) \int_0^\delta t \varphi(x+it)\, dt. 
  \end{align*}   
\end{enumerate}
\end{Lemma}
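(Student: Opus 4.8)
The plan is to read part~(i) as nothing more than a second-order Taylor expansion of the height-$y$ integral about the safe height $\delta$, and then to deduce (ii) and (iii) from the simple-behaviour bound together with dominated convergence. Accordingly I set
\[
  I(y)=\int_a^b h(x)\,\varphi(x+iy)\,dx,
\]
which, for $y$ in any compact subinterval of $(0,\delta]$, is a smooth function of $y$: on the strip $[a,b]\times(0,\delta]$ the function $\varphi$ is holomorphic, hence jointly smooth, so differentiation under the integral sign is legitimate and no boundary effect is felt yet. The entire content of~(i) will then be the identity
\[
  I(y)=I(\delta)+(y-\delta)I'(\delta)+\int_\delta^y (y-t)\,I''(t)\,dt,
\]
the same second-order fundamental formula of calculus that precedes Lemma~\ref{first}, once $I'$ and $I''$ are put into the right shape.

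To compute these derivatives I would use the Cauchy--Riemann relation $\partial_y\varphi(x+iy)=i\,\partial_x\varphi(x+iy)$ to convert vertical into horizontal differentiation, and then integrate by parts in $x$, the endpoint contributions dropping out through $h(a)=h(b)=0$. This yields
\[
  I'(y)=i\int_a^b h(x)\,\partial_x\varphi(x+iy)\,dx=-i\int_a^b h'(x)\,\varphi(x+iy)\,dx,
\]
and, differentiating a second time and integrating by parts once more (now the endpoint terms persist, since $h'(a),h'(b)$ need not vanish),
\[
  I''(y)=h'(b)\varphi(b+iy)-h'(a)\varphi(a+iy)-\int_a^b h''(x)\,\varphi(x+iy)\,dx.
\]
Feeding $I(\delta)$, $I'(\delta)=-i\int_a^b h'(x)\varphi(x+i\delta)\,dx$, and this $I''$ into the Taylor identity, flipping the limits so that $\int_\delta^y(y-t)\,I''\,dt$ becomes $\int_y^\delta(t-y)\,I''\,dt$, and applying Fubini to the double integral produced by the $h''$-term, delivers exactly the four summands displayed in~(i).

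For~(ii) the continuity of $y\varphi(x+iy)$ on $(0,\delta]$ is immediate from holomorphy, while its boundedness by some constant $C$ on $[a,b]\times(0,\delta]$ is precisely the simple-behaviour hypothesis built into our choice of $\delta$. Hence $\int_0^\epsilon|y\varphi(x+iy)|\,dy\le C\epsilon$, so the improper integral $\int_0^\delta y\varphi(x+iy)\,dy$ converges absolutely, is bounded by $C\delta$ uniformly in $x$, and is measurable in $x$ as a pointwise limit of the continuous functions $x\mapsto\int_\epsilon^\delta y\varphi(x+iy)\,dy$. Part~(iii) is then obtained by letting $y\downarrow0$ in~(i): the first two summands are elementary, and in each remaining summand I would write the $t$-integral as $\int_0^\delta\chi_{[y,\delta]}(t)(t-y)\varphi(\,\cdot\,+it)\,dt$ and invoke dominated convergence with dominating bound $|(t-y)\varphi(\,\cdot\,+it)|\le C$, valid because $t\ge y$ forces $0\le(t-y)/t\le1$ and $|t\varphi(\,\cdot\,+it)|\le C$; for the $h''$-term a second dominated-convergence pass in $x$ (with $h''$ bounded on $[a,b]$ and the inner integral bounded by $C\delta$) moves the limit inside the $x$-integral.

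The main obstacle I anticipate is not the algebra of~(i)---that is a mechanical double integration by parts---but the bookkeeping in the limit~(iii): the inner $t$-integral is taken over a domain $[y,\delta]$ that shrinks to $[0,\delta]$ while its integrand genuinely blows up like $1/t$ near the boundary, so the whole convergence rests on the cancellation $0\le(t-y)/t\le1$ on $t\ge y$, which is exactly what keeps the integrand dominated by the constant $C$ furnished by part~(ii). This is the one place where the simple-behaviour estimate is indispensable, and where the reformulation~(i)---engineered to trade $h$ for $h''$ and to repackage $\varphi$ into the bounded combination $t\varphi$---earns its keep.
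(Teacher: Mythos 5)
Your proposal is correct and follows essentially the same route as the paper: the same second-order fundamental formula of calculus applied to $I(y)=\int_a^b h(x)\varphi(x+iy)\,dx$, the same conversion of $y$-derivatives into $x$-derivatives via Cauchy--Riemann followed by integration by parts (with $h(a)=h(b)=0$ killing the first boundary terms and $h'(a),h'(b)$ surviving in the second), and the same dominated/bounded convergence argument resting on the bound $0\le (t-y)/t\le 1$ combined with $|t\varphi(x+it)|\le C$ to pass to the limit in (ii) and (iii).
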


\begin{proof}
%
(i) For $0 < y \leq \delta$, let $\varphi_y$ be a functional on $C_0^2[a,b]$ defined by
  \[
    \varphi_y(h) = \int_{a}^b h(x) \varphi(x+iy)\, dx.
  \]
  The fundamental formula in calculus of second order is then applied to $\varphi_y(h)$ to obtain
  \[
    \varphi_y(h) = \varphi_\delta(h) + (y-\delta) \left.\frac{d\varphi_t(h)}{dt}\right|_{t=\delta}
    + \int_\delta^y (y-t) \frac{d^2\varphi_t(h)}{dt^2}\, dt.
  \]
  The coefficient of the linear term is integrated-by-parts to become
  \begin{align*}
    \left.\frac{d\varphi_t(h)}{dt}\right|_{t=\delta}
    &= \int_{a}^b h(x) \left.\frac{\partial}{\partial t} \varphi(x + it)\right|_{t=\delta}\, dx
    = i \int_{a}^b h(x) \varphi'(x + i\delta)\, dx\\
    &= - i \int_{a}^b h'(x) \varphi(x + i\delta)\, dx,  
  \end{align*}
  whereas 
  \[
     \frac{d^2}{dt^2}\varphi_t(h)
    = \int_{a}^b h(x) \frac{\partial^2}{\partial t^2} \varphi(x + it)\, dx 
    = i^2 \int_{a}^b h(x) \varphi''(x + it)\, dx
  \]
  is integrated-by-parts twice to become 
  \[
    h'(b) \varphi(b+it) - h'(a) \varphi(a+it) - \int_{a}^b h''(x) \varphi(x+it)\, dx
  \]
  and hence
  \begin{multline*}
    \int_y^\delta (t-y)  \frac{d^2}{dt^2}\varphi_t(h)\, dt =
    \int_y^\delta (t-y) \Bigl(h'(b) \varphi(b+it) - h'(a) \varphi(a+it) \Bigr)\, dt\\
    - \int_y^\delta dt\, (t-y) \int_{a}^b h''(x) \varphi(x+it)\, dx. 
  \end{multline*}
  
 (ii) From the simple behavior assumption $t |\varphi(x+it)| \leq C$ for $x \in [a,b]$ and $0 < t \leq \delta$, one sees 
  \[
    (0 \vee (t-y)) |\varphi(x+it)|
    \leq C \frac{0 \vee(t-y)}{t} \leq C
  \]
  if $0 < y \leq \delta$ and $t>0$.
  In view of $\lim_{y \downarrow 0} 0 \vee (t-y) = t$ for each $0 < t \leq \delta$,
  we can apply the bounded convergence theorem to the function $(0\vee (t-y)) \varphi(x+it)$ of $t \in (0,\delta]$ for each $x \in [a,b]$
  to see that the continuous function 
  \[
    \int_y^\delta (t-y) \varphi(x+it)\, dt = \int_{(0,\delta]} (0 \vee(t-y)) \varphi(x + it)\, dt 
  \]
  of $x \in [a,b]$ converges pointwise to 
  \begin{align*}
    \lim_{y \downarrow 0} \int_y^\delta (t-y) \varphi(x+it)\, dt
    &= \int_{(0,\delta]} \lim_{y \downarrow 0} (0 \vee(t-y)) \varphi(x + it)\, dt\\
    &= \int_{(0,\delta]} t \varphi(x+it)\, dt,  
  \end{align*}
  which is a bounded measurable function of $x \in [a,b]$.
  
  (iii) The bounded convergence theorem is used to the above convergence again and we obtain
  \begin{align*}
    \lim_{y \downarrow 0} \int_y^\delta dt\, (t-y)
    &\int_{a}^b h''(x) \varphi(x+it)\, dx\\ 
    &= \int_{a}^b dx\, h''(x) \lim_{y \downarrow 0} \int_y^\delta (t-y) \varphi(x+it)\, dt\\
    &=  \int_{a}^b dx\, h''(x) \int_0^\delta t \varphi(x+it)\, dt, 
  \end{align*}
  which together with the convergence in (ii) proves the assertion. 
\end{proof}

\begin{Corollary}\label{C2}
  As a bounded linear functional of $C_0^2[a,b]$, the boundary limit $\displaystyle \lim_{y \to +0} \varphi(x+iy)\, dx$ exists and is given by
  \[
    \lim_{y \to +0} \int_a^b h(x) \varphi(x+iy)\, dx = \int_a^b h''(t) \Phi(t)\, dt,
  \]
  where
  \begin{multline*}
    \Phi(t) = \int_t^b (x + i\delta - t) \varphi(x+i\delta)\, dx
    - \frac{b-t}{b-a} \int_a^b (x + i\delta - a) \varphi(x+i\delta)\, dx\\
    + \frac{t-a}{b-a} \int_0^\delta y \varphi(b+iy)\, dy
    + \frac{b-t}{b-a} \int_0^\delta y \varphi(a+iy)\, dy\\
    - \int_0^\delta y \varphi(t+iy)\, dy
  \end{multline*}
  is a bounded measurable function of $t \in [a,b]$ and satisfies $\Phi(a) = \Phi(b) = 0$. 
\end{Corollary}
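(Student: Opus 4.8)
The plan is to derive the stated formula directly from Lemma~\ref{second}(iii) by substituting the integral representations of $h$ and $h'$ in terms of $h''$ recorded just before that lemma, and then reorganizing everything into a single integral against $h''$. First I would take the five-term expression
\begin{align*}
    \lim_{y \to +0} \int_{a}^b h(x) \varphi(x + iy)\, dx
    &= \int_{a}^b h(x) \varphi(x+i\delta)\, dx
    + i\delta \int_{a}^b h'(x) \varphi(x+i\delta)\, dx\\
    &+ h'(b) \int_0^\delta t \varphi(b+it)\, dt - h'(a) \int_0^\delta t \varphi(a+it)\, dt\\
    &- \int_{a}^b dx\,  h''(x) \int_0^\delta t \varphi(x+it)\, dt
\end{align*}
and rewrite every occurrence of $h$, $h'$, $h'(a)$, $h'(b)$ through $h''$.

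For the first two terms I would insert $h(x)=\int_a^x(x-t)h''(t)\,dt-\frac{x-a}{b-a}\int_a^b(b-t)h''(t)\,dt$ and $h'(x)=\int_a^x h''(t)\,dt-\frac{1}{b-a}\int_a^b(b-t)h''(t)\,dt$, then apply Fubini over the triangle $a\le t\le x\le b$. The crucial bookkeeping step is that the leading pieces of these two terms combine into $\int_a^b h''(t)\bigl[\int_t^b(x-t+i\delta)\varphi(x+i\delta)\,dx\bigr]dt$, producing exactly the factor $(x+i\delta-t)$ in $\Phi$, while the two correction pieces (those carrying the $1/(b-a)$ factor) merge into $-\frac{b-t}{b-a}\int_a^b(x+i\delta-a)\varphi(x+i\delta)\,dx$.

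For the boundary terms I would use the evaluations $h'(b)=\int_a^b\frac{t-a}{b-a}h''(t)\,dt$ and $h'(a)=-\int_a^b\frac{b-t}{b-a}h''(t)\,dt$, obtained from the same representation, which turn the $h'(b)$- and $h'(a)$-terms into the two weighted boundary integrals of $\Phi$; the last term already has the form $-\int_a^b h''(t)\int_0^\delta y\varphi(t+iy)\,dy\,dt$ after renaming variables. Collecting all contributions under $\int_a^b h''(t)(\cdot)\,dt$ then reads off $\Phi(t)$.

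Finally I would record the three claimed properties. Boundedness and measurability follow because the pieces involving $\varphi(x+i\delta)$ are continuous on $[a,b]$, the argument staying at distance $\delta$ from $\R$, while the pieces $\int_0^\delta y\varphi(\cdot+iy)\,dy$ are bounded and measurable by Lemma~\ref{second}(ii). The endpoint conditions $\Phi(a)=\Phi(b)=0$ follow by direct substitution: at $t=a$ the first and second terms cancel and the fourth and fifth cancel, the rest vanishing, and symmetrically at $t=b$. Boundedness of the functional is then immediate from $|\int_a^b h''(t)\Phi(t)\,dt|\le \|h''\|_{[a,b]}\int_a^b|\Phi(t)|\,dt$ together with the isomorphism $h\mapsto h''$ of $C_0^2[a,b]$ with $C[a,b]$. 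I expect the only real obstacle to be the Fubini bookkeeping that fuses the two $\delta$-line integrals into the single factor $(x+i\delta-t)$; there is no analytic difficulty beyond what Lemma~\ref{second} already supplies.
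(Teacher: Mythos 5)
Your proposal is correct and follows essentially the same route as the paper's own proof, which likewise substitutes the representations of $h$ and $h'$ in terms of $h''$ into Lemma~\ref{second}(iii) and reverses the order of integration via $\int_a^b dx \int_a^x dt = \int_a^b dt \int_t^b dx$ to read off $\Phi$. Your explicit bookkeeping (merging the two $i\delta$-line pieces into the factor $x+i\delta-t$, the evaluations $h'(b)=\int_a^b\frac{t-a}{b-a}h''(t)\,dt$ and $h'(a)=-\int_a^b\frac{b-t}{b-a}h''(t)\,dt$, and the endpoint cancellations) checks out.
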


\begin{proof}
  Just express $h', h$  in the formula (iii) by $h''$ and then use the identity
  \[
  \int_a^b dx \int_a^x dt = \int_a^b dt \int_t^b dx 
  \]
  to reverse the order of repeated integrations. 
\end{proof}

\begin{Example}
  For $\varphi(z) = -1/z$ and $\delta>0$,
  \[
    \int_0^\delta y\varphi(x+iy)\, dy =
    \begin{cases}
      i\delta - x\log(x+i\delta) + x\log x &(x>0)\\
      i\delta &(x=0)\\
      i\delta - x\log(-x-i\delta) + x\log(-x) &(x < 0)
    \end{cases}
  \]
  and
  \begin{multline*}
    \Phi(x) = -x\log|x| + \frac{b-x}{b-a} a\log |a| + \frac{x-a}{b-a} b\log |b|\\
    - i\pi (0\wedge x)
    + i\pi (0\wedge a) \frac{b-x}{b-a}
    + i\pi (0\wedge b) \frac{x-a}{b-a}.   
  \end{multline*}
  These are continuous functions of $x \in \R$ but not differentiable at $x=0$.
  Notice also that $\Phi$ does not depend on the cutoff parameter $\delta>0$. 
\end{Example}

We now apply the above lemma to the function 
\[
h(x) = \left.\frac{\partial g_r(x)}{\partial r}\right|_{r=1} = i (1+a^2) \frac{xf(x)}{(1+x^2)^2}
    - i \frac{a^2-x^2}{2(1+x^2)} f'(x) 
\]
in Lemma~\ref{first} to conclude that (recall $\alpha = 2\arctan(a)$)
\[
(*)\quad  \lim_{r \uparrow 1} \int_{-\alpha}^\alpha f(\tan\frac{t}{2}) \phi(re^{it})\, dt
  = -i \lim_{y \to +0} \int_{-a}^a f(s) \varphi(s+iy) \frac{2}{1+s^2}\, ds
\]
if $f$ is an analytic function on $[-a,a]$ satisfying $f(\pm a) = 0$.

Since $\varphi(z)$ behaves simply on $[-a,a] + i0$, so does $\phi(e^{iz})$ on $[-\alpha,\alpha] + i0$ by Lemma~\ref{simples} and
we can talk about boundary limits
$\varphi(s +i0)$ on $(-a,a)$ and $\phi(e^{it - 0})$ on $(-\alpha,\alpha)$ as distributions.

The above equality $(*)$ then indicates the distributional relation
  \[
(**)\quad \int_\R f(\tan\frac{t}{2}) \phi(e^{it \mp 0})\, dt
  =-i  \int_\R f(s) \varphi(s \pm i0) \frac{2}{1+s^2}\, ds
\]
valid for $f \in C_c^\infty(-a,a)$.

In fact, by Corollary~\ref{C2}, both $\varphi(x+i0)$ on $(-a,a)$ and $\phi(e^{i\theta - 0})$ on $(-\alpha,\alpha)$ are extended to 
linear functionals on $C_0^2[-a,a]$ and on $C_0^2[-\alpha,\alpha]$ respectively.

Let $\sA_0[-a,a]$ be the linear subspace of $C_0^2[-a,a]$ consisting of analytic functions on $[-a,a]$ vanishing at $\pm a$ and similarly for
$\sA_0[-\alpha,\alpha]$. Since M\"obius transforms give rise to analytic change-of-variables,
the correspondence $f(s) \longleftrightarrow f(\tan(t/2))$ induces an isomorphism between $\sA_0[-a,a]$ and $\sA_0[-\alpha,\alpha]$.

Moreover, for an analytic function $F$ on $[-a,a]$, its normalized indefinite integral $f$ belongs to $\sA_0[-a,a]$.
Since the set of analytic functions on $[-a,a]$ is dense in $C[-a,a]$ relative to the uniform norm,
$\sA_0[-a,a]$ is dense in $C_0^2[-a,a]$ as well.

Thus, the equality $(*)$ is valid even for $f \in C_0^2[-a,a] \supset C_c^\infty(-a,a)$.
In particular, the distributional equality $(**)$ holds.

As a summary of discussions so far, we have proved 

\begin{Lemma}
  Let $\varphi$ be a holomorphic function defined on an open neighborhood of $[-a,a] \pm i0$ in $\C_\pm$
  and assume that $\varphi$ behaves simply on $[-a,a] \pm i0$.
  Then the equality $(*)$ holds for any $f \in C_0^2[-a,a]$ and $(**)$ for any $f \in C_c^\infty(-a,a)$.
\end{Lemma}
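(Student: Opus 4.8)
The plan is to prove $(*)$ first for analytic test functions vanishing at the endpoints, i.e.\ for $f \in \sA_0[-a,a]$, and then to propagate it to all of $C_0^2[-a,a]$ by a density-and-continuity argument; the distributional statement $(**)$ will then fall out as the reformulation of $(*)$ for $f \in C_c^\infty(-a,a)$.

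For the analytic case I would start from the Cauchy-theorem identity already recorded above, which rewrites $\int_{-a}^a f(s)\varphi(s_r)\,\frac{ds}{1+s^2}$ as the line integral $\int_{-u_r(a)}^{u_r(a)} g_r(x)\varphi(x+iv_r(a))\,dx$. The two reduction lemmas preceding Lemma~\ref{first} let me first enlarge the range from $(-u_r(a),u_r(a))$ to $(-a,a)$ at no cost in the limit, and then discard the second-order remainder in the expansion $g_r(x) = g(x) + (r-1)\,\partial_r g_r(x)|_{r=1} + \int_1^r (r-t)\,\partial_t^2 g_t(x)\,dt$. It then remains to treat the leading and the linear terms. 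The linear coefficient $h(x) := \partial_r g_r(x)|_{r=1}$ computed in Lemma~\ref{first} is analytic and vanishes at $x=\pm a$ (because $f(\pm a)=0$ and $a^2-x^2$ vanishes there), hence lies in $C_0^2[-a,a]$; by Corollary~\ref{C2} the integral $\int_{-a}^a h(x)\varphi(x+iv_r(a))\,dx$ stays bounded as $r\uparrow 1$, so its product with $(r-1)\to 0$ vanishes. The leading term $\int_{-a}^a g(x)\varphi(x+iv_r(a))\,dx$, with $g=f/(1+x^2)\in\sA_0[-a,a]$, converges along the path $y=v_r(a)\downarrow 0$ to $\lim_{y\to +0}\int_{-a}^a \frac{f(x)}{1+x^2}\varphi(x+iy)\,dx$, again by Corollary~\ref{C2}. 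Tracking the prefactor $-2i$ arising from $\phi=-i\varphi$ together with $dt=\frac{2}{1+s^2}\,ds$ then delivers $(*)$ for $f\in\sA_0[-a,a]$.

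To pass from $\sA_0[-a,a]$ to $C_0^2[-a,a]$ I would note that both sides of $(*)$ are bounded linear functionals on $C_0^2[-a,a]$: the right-hand side directly by Corollary~\ref{C2} applied to the test function $2f/(1+s^2)$, and the left-hand side by the same corollary applied to $\phi(e^{iz})$ on $[-\alpha,\alpha]$ via the M\"obius isomorphism $\sA_0[-a,a]\cong\sA_0[-\alpha,\alpha]$ induced by $f(s)\leftrightarrow f(\tan\frac t2)$. Since analytic functions are uniformly dense in $C[-a,a]$ and the map $h\mapsto h''$ identifies $C_0^2[-a,a]$ with $C[-a,a]$, the subspace $\sA_0[-a,a]$ is dense in $C_0^2[-a,a]$, so two bounded functionals agreeing on it coincide; hence $(*)$ holds for every $f\in C_0^2[-a,a]$. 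As $C_c^\infty(-a,a)\subset C_0^2[-a,a]$ and, for such $f$, the two limits in $(*)$ are by definition the pairings of the boundary distributions $\phi(e^{it\mp 0})$ and $\varphi(s\pm i0)$ (whose existence is secured by the corollary to Lemma~\ref{simples}) against the respective test functions, $(*)$ reads exactly as the distributional identity $(**)$.

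The main obstacle is controlling the linear term so that it does not swamp the limit. Since $\varphi$ only satisfies $y\varphi(x+iy)=O(1)$, the crude bound on $\int_{-a}^a h\,\varphi(x+iv_r(a))\,dx$ would be of order $1/(1-r)$, which the prefactor $(r-1)$ would fail to annihilate. It is precisely to convert this borderline-singular boundary behaviour into a genuinely bounded functional that the $C_0^2$ machinery of Corollary~\ref{C2} (two integrations by parts against $h''$, with the norm estimates $\|h\|_{[a,b]}\le (b-a)\|h'\|_{[a,b]}$ and $\|h'\|_{[a,b]}\le \frac32 (b-a)\|h''\|_{[a,b]}$) was developed, and it is this same estimate that simultaneously yields the vanishing of the linear term and the continuity required in the density step.
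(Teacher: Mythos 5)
Your proposal is correct and follows essentially the same route as the paper's own argument: the arc-to-line Cauchy identity, the two reduction lemmas and Lemma~\ref{first} reduce everything to the leading term and the $(r-1)$-weighted linear term, both handled by Corollary~\ref{C2} (convergence of the former, boundedness of the latter since its coefficient lies in $C_0^2[-a,a]$), exactly as in the text. The extension from $\sA_0[-a,a]$ to $C_0^2[-a,a]$ via boundedness of both functionals in the $\|h''\|$-norm and density of analytic test functions, and the reading of $(*)$ as $(**)$ for $f \in C_c^\infty(-a,a)$, likewise reproduce the paper's argument.
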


\begin{Corollary}\label{basic}
   Let $\varphi$ be a holomorphic function defined on an open neighborhood of $\R \pm i0$ in $\C_\pm$
  and assume that $\varphi$ behaves simply on $\R \pm i0$ so that the boundary limits $\varphi(s\pm i0)$ on $\R$ and
  $\phi(e^{it \mp 0})$ on $(-\pi,\pi)$ exist as distributions. Then we have
  \[
\int_\R f(\tan\frac{t}{2}) \phi(e^{it \mp 0})\, dt
  =-i  \int_\R f(s) \varphi(s \pm i0) \frac{2}{1+s^2}\, ds
\]
for $f \in C_c^\infty(\R)$. 
\end{Corollary}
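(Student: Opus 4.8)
The plan is to deduce the global identity from the preceding Lemma by a simple localization argument, the point being that every $f \in C_c^\infty(\R)$ has compact support and both sides of the asserted equality are pairings of a distribution against a compactly supported test function.

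First, given $f \in C_c^\infty(\R)$, I would choose $a>0$ large enough that $\operatorname{supp} f \subset (-a,a)$, so that $f \in C_c^\infty(-a,a)$. Writing $\alpha = 2\arctan a \in (0,\pi)$, the transported function $f(\tan\frac{t}{2})$ is then smooth and supported in $(-\alpha,\alpha) \subset (-\pi,\pi)$, since $t \mapsto \tan(t/2)$ is an analytic diffeomorphism of $(-\pi,\pi)$ onto $\R$ carrying $(-\alpha,\alpha)$ onto $(-a,a)$; in particular the integral $\int_\R f(\tan\frac{t}{2})\phi(e^{it\mp 0})\,dt$ reduces to one over $(-\alpha,\alpha)$.

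Second, I would note that the hypothesis is inherited locally: by definition, $\varphi$ behaving simply on $\R \pm i0$ means it behaves simply at every $s \in \R$, hence in particular on $[-a,a] \pm i0$. Thus $\varphi$ meets the hypotheses of the preceding Lemma, which then yields the equality $(**)$ for this very $f \in C_c^\infty(-a,a)$ --- and that is exactly the claimed identity.

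The only point requiring care is the consistency of the local and global boundary distributions: on the right the Corollary pairs the global limit $\varphi(s\pm i0)$ on $\R$ with $f$, whereas the Lemma uses the boundary limit defined on $(-a,a)$, and similarly $\phi(e^{it\mp 0})$ on $(-\pi,\pi)$ versus its restriction to $(-\alpha,\alpha)$. This is harmless, because the distributional boundary value is defined as $\lim_{y \to +0}\int f(x)\varphi(x\pm iy)\,dx$, a quantity depending only on the values of $\varphi(x\pm iy)$ for $x \in \operatorname{supp} f$; hence the restriction of the global limit distribution to any open subinterval coincides with the boundary limit computed there. Since $f$ and $f(\tan\frac{t}{2})$ are supported in $(-a,a)$ and $(-\alpha,\alpha)$ respectively, both pairings see only the local data, so the local equality $(**)$ is literally the global one. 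I expect this consistency observation to be the only substantive step, and it is immediate from the local nature of distributional boundary values.
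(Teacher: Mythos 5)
Your proposal is correct and is essentially the paper's own route: the paper presents Corollary~\ref{basic} as an immediate consequence of the preceding Lemma, obtained precisely by choosing $a$ with $\operatorname{supp} f \subset (-a,a)$ and invoking $(**)$ for $f \in C_c^\infty(-a,a)$. Your explicit verification that the global boundary distributions restrict consistently to the local ones (since the pairing $\lim_{y \to +0}\int f(x)\varphi(x\pm iy)\,dx$ depends only on the values over $\operatorname{supp} f$) is exactly the tacit point the paper leaves to the reader.
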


This corollary identifies the most part $-\pi < t < \pi$ of the periodic distribution $\phi(e^{it\mp 0})$ on $t \in \R$ as
a distribution $\varphi(s \pm i0)$ on $s \in \R$.

To recover the exceptional point $\pi + 2\pi \Z$, we notice that a rotation of the $z$-plane by an angle $\theta$ is described in the $w$-plane by
\begin{align*}
  i \frac{1 - e^{i\theta}z}{1 + e^{i\theta}z}
  &= i \frac{i+w - e^{i\theta}(i+w)z}{i + w + e^{i\theta}(i+w)z}
    = i \frac{i+w - e^{i\theta}(i-w)}{i + w + e^{i\theta}(i-w)}\\
  &= i \frac{(e^{-i\theta/2} + e^{i\theta/2})w + i(e^{-i\theta/2} - e^{i\theta/2})}{(e^{-i\theta/2} - e^{i\theta/2})w + i(e^{-i\theta/2} + e^{i\theta/2})}\\
  &=
    \begin{pmatrix}
      \cos(\theta/2) & \sin(\theta/2)\\
      - \sin(\theta/2) & \cos(\theta/2)
    \end{pmatrix}. w, 
\end{align*}
i.e., the M\"obius transform by the rotation matrix of angle $-\theta/2$.
Here the dot indicates the matric action on $w \in \overline{\C}$ by the M\"obius transformation.
We shall return to this in a later section. 

For the choice $\theta = \pm \pi$, the involutive map $z \mapsto -z$ in the $z$-plane is then transferred to the inversion
$w \mapsto -1/w$ in the $w$-plane and we have
$\phi(-z) = -i \varphi(-1/w)$ so that
the distribution $\phi(e^{i(t+\pi) \mp 0})$ shifted by the half period $\pi$ is related to the inversion
$\widetilde\varphi(w) \equiv \varphi(-1/w)$ by
\[
  \lim_{r \to 1 \mp 0} \int_{-\beta}^\beta f(\tan\frac{t}{2}) \phi(re^{i(t+\pi)})\, dt =
  - i \lim_{y \to \pm 0} \int_{-b}^b f(s) \widetilde\varphi(s + iy)\frac{2}{1+s^2}\, ds
\]
for $b>0$ and $f \in C_0^2[-b,b]$ with $0 < \beta \equiv 2\arctan b < \pi$.

If we rewrite the left hand side as
\[
  \lim_{r \to 1\mp 0} \int_{\pi - \beta}^{\pi + \beta} f(-\cot t) \phi(re^{it})\, dt
\]
and notice the fact that $-\cot t$ is the inversion of $\tan t$,
the inversion of $\displaystyle \frac{2}{1+s^2} \widetilde\varphi(s\pm i0)$ of $-b < s < b$,
which is a distribution on $\overline{\R} \setminus [-1/b,1/b]$, turns out to describe $\phi(e^{it\mp 0})$ for $\pi-\beta < t < \pi + \beta$.

In particular, when $ab > 1$, on the overlapped region $(-a,a) \setminus [-1/b,1/b]$,
both $\frac{2}{1+s^2} \varphi(s \pm i0)$ and the inversion of $\frac{2}{1+s^2} \widetilde\varphi(s \pm i0)$
give rise to the same distribution $i\phi(e^{it\mp 0})$ on $(\beta-\pi,\pi-\beta) \setminus [-\alpha,\alpha]$, whence
$\frac{2}{1+s^2} \varphi(s \pm i0)$ coincides with the inversion of $\frac{2}{1+s^2} \widetilde\varphi(s \pm i0)$
on the region $(-a,a) \setminus [-1/b,1/b]$. 


Now rather long discussions are finally summarized as follows:

\begin{Theorem}
  Under the assumption of Corollary~\ref{basic}, the following hold:
  \begin{enumerate}
  \item
    The inversion $\widetilde\varphi(w) = \varphi(-1/w)$ shares the property with $\varphi$.
  \item
    Limits $\varphi(x\pm i0) = \lim_{y \to \pm 0} \varphi(x + iy)$ and
    $\widetilde\varphi(x \pm i0) = \lim_{y \to \pm 0} \widetilde\varphi(x + iy)$ exist as distributions on $\R$.
  \item
    On $\R^\times = \R \setminus \{ 0\}$, $\varphi(x \pm i0)/(1+x^2)$ and $\widetilde\varphi(x\pm i0)/(1+x^2)$ are related by
    the inversion: For $f \in C_c^\infty(\R^\times)$, we have 
    \[
      \int_{\R^\times} f(x) \frac{\varphi(x \pm i0)}{1+x^2}\, dx
      =       \int_{\R^\times} \widetilde{f}(x) \frac{\widetilde\varphi(x \pm i0)}{1+x^2}\, dx. 
    \]
  \item
    The joined distribution on $\overline{\R}$, ensured by (iii) and also denoted by $\varphi(x\pm i0)$,
    is related with the M\"obius transform of the boundary distribution $\phi(e^{it\mp 0})$ on $e^{it} \in \T$ by
    \[
      \oint_{2\pi} f(\tan\frac{t}{2}) \phi(e^{it\mp 0})\, dt
      = -i \int_{\overline{\R}} f(s) \varphi(s\pm i0) \frac{2}{1+s^2}\, ds
    \]
    for $f \in C^\infty(\overline{\R})$ ($f(-\infty) = f(\infty)$ particularly). 
  \end{enumerate}
\end{Theorem}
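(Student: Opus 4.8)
The plan is to dispose of (i) and (ii) by observing that the simple-behavior condition is invariant under the inversion M\"obius map $w = -1/z$, then to extract (iii) as the compatibility of the two coordinate descriptions furnished by Corollary~\ref{basic}, and finally to glue everything in (iv). For (i) I would start from the global characterization recorded just after Lemma~\ref{simples}: $\varphi$ behaves simply on $\overline{\R} \pm i0$ exactly when $\frac{\text{Im}\,z}{1+|z|^2}\varphi(z)$ is bounded on a boundary neighborhood. Since $-1/z = -\overline{z}/|z|^2$ gives $\text{Im}(-1/z) = \text{Im}(z)/|z|^2$ and $|{-1/z}|^2 = |z|^{-2}$, a one-line computation yields
\[
  \frac{\text{Im}\,z}{1+|z|^2}\,\widetilde\varphi(z) = \frac{\text{Im}(-1/z)}{1+|z|^{-2}}\,\varphi(-1/z),
\]
and, because $z \mapsto -1/z$ is an involution carrying $\C_\pm$ into itself and a boundary neighborhood of $\overline{\R}$ onto another, the left-hand side is bounded iff the right-hand side is. Hence $\widetilde\varphi$ inherits simple behavior on $\overline{\R}\pm i0$, which is (i), and (ii) then follows at once by applying the boundary-limit existence result cited from Appendix~B to both $\varphi$ and $\widetilde\varphi$.

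For (iii) the plan is to apply Corollary~\ref{basic} twice. Applied to $\varphi$ it reads $\oint f(\tan\frac{t}{2})\phi(e^{it\mp 0})\,dt = -i\int f(s)\varphi(s\pm i0)\frac{2}{1+s^2}\,ds$; applied to $\widetilde\varphi$, whose M\"obius transform is $\widetilde\phi(z) = \phi(-z)$ by the rotation-by-$\pi$ computation preceding the theorem, the circle side becomes $\phi(e^{i(t+\pi)\mp 0})$, and after the substitution $t \mapsto t-\pi$ this is tested against $f(-\cot\frac{t}{2}) = \widetilde f(\tan\frac{t}{2})$. Comparing the two against the common circle distribution, and restricting to $f \in C_c^\infty(\R^\times)$ so that both $f$ and $\widetilde f$ have support in a fixed compact subset of $\R^\times$, I obtain
\[
  \int_{\R^\times} f(s)\,\widetilde\varphi(s\pm i0)\,\frac{ds}{1+s^2} = \int_{\R^\times} \widetilde f(s)\,\varphi(s\pm i0)\,\frac{ds}{1+s^2},
\]
where I would record separately the inversion-invariance of the measure, $\frac{2}{1+(-1/s)^2}\,d(-1/s) = \frac{2}{1+s^2}\,ds$. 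Relabelling $f \leftrightarrow \widetilde f$ and cancelling the common factor $2$ gives precisely the identity asserted in (iii).

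For (iv) the plan is first to use (iii) to glue: the distributions $\frac{\varphi(x\pm i0)}{1+x^2}$ on $\R$ and the inversion of $\frac{\widetilde\varphi(x\pm i0)}{1+x^2}$ on $\overline{\R}\setminus\{0\}$ agree on the overlap $\R^\times$, hence patch to a single distribution on $\overline{\R}$, the joined $\varphi(x\pm i0)$. The global formula I would then prove by a partition of unity on the circle: choose $\chi_0,\chi_\infty \in C^\infty(\overline{\R})$ with $\chi_0 + \chi_\infty \equiv 1$, with $\chi_0$ supported away from $\infty$ (inside the $(-\pi,\pi)$ chart) and $\chi_\infty$ supported near $\infty$ (so that $\widetilde{\chi_\infty}$ is supported near $0$). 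Writing $f = \chi_0 f + \chi_\infty f$, I apply Corollary~\ref{basic} to the first summand and, to the second, Corollary~\ref{basic} for $\widetilde\varphi$ read through the inversion, and add; since the joined distribution was defined so that the two local pieces restrict consistently, the sum reproduces $-i\int_{\overline{\R}} f(s)\varphi(s\pm i0)\frac{2}{1+s^2}\,ds$ paired against the full circle integral $\oint_{2\pi}$.

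The step I expect to be the main obstacle is the treatment of the single exceptional angle $t = \pi$, equivalently the point $s = \infty$: the circle integral $\oint_{2\pi}$ is genuinely global and carries the atom at infinity, so the delicate point is to confirm that reconstructing it from the two local charts introduces no spurious contribution concentrated at $\infty$ and that the gluing of distributions across $\R^\times$ is legitimate. This is exactly where simple behavior at $\infty$ -- that is, simple behavior of $\widetilde\varphi$ at $0$, secured by (i) -- is indispensable, and where the inversion-invariance of both the measure $\frac{2}{1+s^2}\,ds$ and the simple-behavior condition must be invoked together.
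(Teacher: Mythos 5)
Your proposal is correct and takes essentially the same route as the paper: the paper likewise gets (i)--(ii) from the inversion-invariance of simple behavior together with Appendix~B, obtains (iii) by matching the two chart descriptions against the common circle distribution $\phi(e^{it\mp 0})$ via the half-period shift $\phi(-z) = -i\varphi(-1/w)$ (comparing on the overlap $(-a,a)\setminus[-1/b,1/b]$ with $ab>1$), and then joins the two local distributions to deduce (iv). Your explicit partition of unity $\chi_0+\chi_\infty\equiv 1$ in (iv) merely formalizes the gluing that the paper leaves implicit in its summary.
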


\section{Boundary Measures}
In this part, we consider a holomorphic function $\varphi$
defined on a boundary neighborhood of an open subset $S$ of $\overline{\R} = \R \sqcup \{\infty\}$,
which is assumed to behave simply on $S$ and therefore 
limits $\varphi(x\pm i0)$ exist as distributions on $S$.

We say that $\varphi$ admits a boundary measure on $S$ if
the distribution
\[
  \frac{1}{2\pi i(1+x^2)} \Bigl(\varphi(x+i0) - \varphi(x-i0) \Bigr)
\]
is given by a complex Radon measure $\lambda$  
(called the \textbf{boundary measure} of $\varphi$) on $S$, i.e.,
\[
  \lim_{y \to +0} \int_S f(s) \frac{\varphi(s+iy) - \varphi(s-iy)}{2\pi i(1+s^2)}\, ds
  = \int_S f(s)\, \lambda(ds) 
\]
for any $f \in C_c^\infty(S)$. 

When $\varphi$ admits a boundary measure $\lambda$,
let $\varphi_\lambda$ be a holomorphic function of $z \in \overline{\C} \setminus \overline{S} \supset \C \setminus \R$
($\overline{S}$ being the closure of $S$ in $\overline{\R}$)
defined by 
\[
  \varphi_\lambda(z) = \int_S \frac{1+sz}{s-z}\, \lambda(ds). 
\]
Then $\varphi_\lambda(\pm i) = \pm i \lambda(S)$ and $\lambda$ is a representing measure of $\varphi_\lambda$, 
whence $\varphi_\lambda$ behaves simply on $\overline{\R}$ and the inversion formula for $\lambda$ implies
\[
  \varphi(x+i0) - \varphi(x-i0) = \varphi_\lambda(x+i0) - \varphi_\lambda(x-i0)
\]
as distributions on $S$.
Since each term in this identity has a meaning as a distribution on $S$, we can rewrite it as 
\[
  \varphi(x+i0) -\varphi_\lambda(x+i0) = \varphi(x-i0) - \varphi_\lambda(x-i0)
\]
and apply the edge-of-the-wedge theorem (\cite[Theorem B]{Ru}) 
to see that $\psi \equiv \varphi - \varphi_\lambda$ is holomorphic on $S$ and satisfies 
$\psi(x+iy) - \psi(x-iy) \to 0$ ($y \to 0$) locally uniformly on $S$.
Consequently, the convergence 
\[
  \lim_{y \to +0} \frac{\varphi(x+iy) - \varphi(x-iy)}{2\pi i (1+x^2)} dx
  = \lim_{y \to +0} \frac{\varphi_\lambda(x+iy) - \varphi_\lambda(x-iy)}{2\pi i (1+x^2)} dx 
\]
has a meaning as a Radon measure on $S$ and results in the initial boundary measure $\lambda$, i.e.,
the boundary measure relation 
\[
  \lim_{y \to +0} \int_{-\infty}^\infty f(x) \frac{\varphi(x+iy) - \varphi(x-iy)}{2\pi i(1+x^2)} dx
  = \int_S f(x)\, \lambda(dx)
\]
remains valid for $f \in C_c(S)$. 


  

\begin{Theorem}\label{main}
  A holomorphic function $\varphi$ on $\C \setminus \R$ admits a representing measure if and only if the following conditions are satisfied.
  \begin{enumerate}
  \item 
   $\varphi$ behaves simply on $\overline{\R}$.
  \item
    We can find an open subset $S$ of $\R$ so that $\R \setminus S$ is a finite set
    and $\varphi$ admits a boundary measure on $S$, i.e., there is a complex Radon measure $\lambda_S$ in $S$ satisfying
    \[
      \lim_{y \to +0} \int_{-\infty}^\infty f(x) \frac{\varphi(x+iy) - \varphi(x-iy)}{2\pi i (1+x^2)} dx
      = \int_S f(x)\, \lambda_S(dx)
   \]
   for any $f \in C_c^\infty(S)$. 
    \end{enumerate}

    Moreover, if this is the case and if we define a holomorphic function $\varphi_S$ on $\C \setminus \R$ by
    \[
      \varphi_S(z) = \int_S \frac{1+sz}{s-z}\, \lambda_S(ds), 
    \]
    then $\varphi - \varphi_S$ is holomorphically extended to $(\C \setminus \R) \sqcup S$
    with each point in $\Sigma \equiv (\R \setminus S) \sqcup \{\infty\}$ being at most a simple pole of $\varphi - \varphi_S$, 
    the representing measure $\lambda$ of $\varphi$ coincides with $\lambda_S$ on $S$ and
    the atomic mass of $\lambda$ at $\sigma \in \Sigma$ is given by 
    \[
      \lambda[\sigma] = 
      \begin{cases}
        - \rho(\sigma)/(1+\sigma^2) &(\sigma \in \R)\\
        \rho(\sigma) &(\sigma = \infty)
      \end{cases}, 
    \]
  where $\rho(\sigma)$ is the residue of $\varphi - \varphi_S$ at $\sigma \in \Sigma$, i.e.,
  \[
    \varphi(z) - \varphi_S(z) - \frac{\rho(\sigma)}{z-\sigma}
  \]
  is holomorphic near $z = \sigma$ if $\sigma \in \R \setminus S$, whereas 
  \[
    \varphi(1/z) - \varphi_S(1/z) - \frac{\rho(\sigma)}{z}
  \]
  is holomorphic near $z=0$ if $\sigma = \infty$. 
\end{Theorem}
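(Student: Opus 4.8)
The plan is to prove the two implications separately: the forward (necessity) direction is immediate from \S3, while the reverse (sufficiency) direction, together with the explicit reconstruction, carries all the weight. For necessity, I would simply take $S=\R$. If $\varphi$ admits a representing measure $\lambda$, then $\varphi$ satisfies Vladimirov's estimate globally on $\C\setminus\R$ (Theorem~\ref{Vl}), i.e. $\frac{\Im z}{1+|z|^2}\varphi(z)$ is bounded, which is exactly (i); and since $\R\setminus S=\emptyset$ is finite, the inversion formula of \S3 exhibits the restriction $\lambda|_\R$ as the boundary measure on $S$, giving (ii).

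For sufficiency, assume (i) and (ii), write $\Sigma=(\R\setminus S)\sqcup\{\infty\}$, and set $\varphi_S(z)=\int_S\frac{1+sz}{s-z}\lambda_S(ds)$. By the boundary-measure construction preceding the theorem, $\lambda_S$ is the representing measure of $\varphi_S$, so $\varphi_S$ behaves simply on $\overline{\R}$ and the edge-of-the-wedge theorem shows that $\psi:=\varphi-\varphi_S$ extends holomorphically across $S$. Hence $\psi$ is holomorphic on $\overline{\C}\setminus\Sigma$, and since both $\varphi$ and $\varphi_S$ behave simply on $\overline{\R}$, so does $\psi$; in particular each point of the finite set $\Sigma$ is an isolated singularity of $\psi$ at which the simple-behavior estimate $|\Im z|\,|\psi(z)|\le C$ (or its inverted form at $\infty$) holds.

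The crux, and the step I expect to be the main obstacle, is to show that simple behavior forces each $\sigma\in\Sigma$ to be at most a simple pole of $\psi$. A naive contour estimate of the Laurent coefficients fails here, because the bound $|\psi|\le C/|\Im z|$ degenerates on the real axis, precisely where any loop around $\sigma$ must cross. I would therefore route through boundary distributions: by Corollary~\ref{C2} the limits $\psi(x\pm i0)$ exist near $\sigma$ as distributions of finite order, and since the jump $\psi(x+i0)-\psi(x-i0)$ vanishes on $S$ it is supported at $\{\sigma\}$, hence a finite combination $\sum_j c_j\delta_\sigma^{(j)}$. Reconstructing $\psi$ from its jump (a Cauchy transform plus a term holomorphic across $\sigma$) shows $\sigma$ is at worst a pole, and the pointwise estimate then kills every principal-part term of order $\ge 2$, since a term behaving like $(z-\sigma)^{-k}$ with $k\ge2$ makes $|\Im z|\,|\psi(z)|$ blow up along $\arg(z-\sigma)=\pi/4$. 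This leaves at most a simple pole with residue $\rho(\sigma)$, and the point $\infty$ is handled identically after the inversion $z\mapsto 1/z$, producing the principal part $\rho(\infty)z$.

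Finally I would subtract the principal parts: $\Phi(z):=\psi(z)-\sum_{\sigma\in\R\setminus S}\frac{\rho(\sigma)}{z-\sigma}-\rho(\infty)z$ is holomorphic on all of $\overline{\C}$, hence a constant $c$ by Liouville. Using the partial-fraction identity $\frac{1+\sigma z}{\sigma-z}=-\frac{1+\sigma^2}{z-\sigma}-\sigma$, one checks that the measure $\lambda:=\lambda_S+\sum_{\sigma\in\R\setminus S}\lambda[\sigma]\delta_\sigma+\lambda[\infty]\delta_\infty$, with $\lambda[\sigma]=-\rho(\sigma)/(1+\sigma^2)$ and $\lambda[\infty]=\rho(\infty)$, satisfies $\int_{\overline{\R}}\frac{1+sz}{s-z}\lambda(ds)=\varphi(z)-(c-c')$ for a constant $c'$ collecting the $\sigma$-terms. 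Since $\frac{1+si}{s-i}+\frac{1-si}{s+i}=0$ for every $s\in\overline{\R}$, averaging at $z=\pm i$ forces $c-c'=\frac{\varphi(i)+\varphi(-i)}{2}$, which is exactly the representing-measure relation; by construction $\lambda$ restricts to $\lambda_S$ on $S$ and has the stated atomic masses. Everything outside the local lemma is either borrowed from the preceding sections or routine bookkeeping, whereas passing from the simple-behavior estimate to \emph{at most a simple pole} genuinely requires the distributional boundary-value machinery rather than a direct contour argument.
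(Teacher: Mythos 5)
Your proof is correct and follows the same route as the paper's: necessity via Vladimirov's estimate and the Appendix~C inversion formula with $S=\R$, and sufficiency by using the pre-theorem edge-of-the-wedge discussion to make $\psi=\varphi-\varphi_S$ holomorphic across $S$, showing each point of the finite set $\Sigma$ is at most a simple pole, subtracting principal parts and invoking Liouville, with the atomic masses read off from the identity $\frac{1+\sigma z}{\sigma-z}=-\frac{1+\sigma^2}{z-\sigma}-\sigma$. The only place you go beyond the paper is the crux you correctly flagged: the paper asserts in one line that simple behavior forces at most a simple pole at each $\sigma\in\Sigma$, whereas you supply a sound justification --- the finite-order jump supported at $\{\sigma\}$ is a finite sum $\sum_j c_j\delta_\sigma^{(j)}$, Plemelj reconstruction plus a second application of edge-of-the-wedge shows $\sigma$ is at worst a pole, and the estimate $|\text{Im}\,z|\,|\psi(z)|\le C$ along $\arg(z-\sigma)=\pi/4$ excludes every principal-part term of order $\ge 2$ --- which is exactly the right repair, since (as you note) a direct contour bound on Laurent coefficients fails because $1/|\sin\theta|$ is not integrable.
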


\begin{proof}
  Clearly the listed conditions are necessary.
  
  To see the reverse implication, we recall that $\varphi - \varphi_S$ is holomorphic on $S$, whence $\varphi - \varphi_S$
  has at most finitely many singularities in $\Sigma$ as a function on $\overline{\C}$.
  
  Since both $\varphi$ and $\varphi_S$ behaves simply on $\overline{\R}$,
  each point $\sigma \in \Sigma$ must be at most a simple pole of $\varphi - \varphi_S$ and
  \[
    \varphi(z) - \varphi_S(z) - \sum_{\sigma \in \R \setminus S} \frac{\rho(\sigma)}{z-\sigma} - \rho(\infty) z
  \]
  is constant as a holomorphic function on the whole sphere $\overline{\C}$.
  Thus, in terms of the measure $\lambda$ on $\overline{\R}$ described in the theorem, we obtain 
  \[
    \varphi(z) = \text{const} + \int_{\overline{\R}} \frac{1+sz}{s-z}\, \lambda(ds)
  \]
  and $\lambda$ is a representing measure of $\varphi$. 
\end{proof}

\begin{Remark}
  Under the existence of a representing measure, the convergence to the boundary measure $\lambda_S$ in (ii) holds for any
  continuous function $f$ on $S$ vanishing on the boundary of $S$ (see Appendix~C). 
\end{Remark}

\section{M\"obius Transform of Representing Measures}
In accordance with the previous article \cite{HY},
a holomorphic map of $\C_+$ into itself is called an \textbf{endofunction} on $\C_+$, 
though it is customary to be named after Carath\'eodory, Herglotz, Nevanlinna, Pick, Riesz and Schur.
An endofunction is obviously extended to a holomorphic function $\varphi$ on $\C \setminus \R$ so that $\varphi = \varphi^*$
(reflection-symmetric extension)
and the classic theorem of G.~Herglotz and F.~Riesz says that
the reflection-symmetric extension of an endofunction admits a positive representing measure
(simply referred to as the representing measure of an endofunction).
M\"obius transforms of positive representing measures are investigated in \cite{HY}.
Here we shall describe the results for complex representing measures.

Start with some notation. Given an invertible matrix
\[
  A =
  \begin{pmatrix}
    a & b\\
    c & d
  \end{pmatrix}
\]
of complex entries, the associated M\"obius transform of $z \in \overline{\C}$ is denoted by
\[
  A.z = \frac{az+b}{cz+d}. 
\]
An endofunction is called an \textbf{autofunction} if it maps $\C_+$ bijectively onto $\C_+$.
Let $\End(\C_+)$ be the set of endofunctions and $\Aut(\C_+)$ be the set of autofunctions.
Remark that $\End(\C_+)$ is a unital semigroup by composition and $\Aut(\C_+)$ is a subgroup of $\End(\C_+)$.
Compared to $\End(\C_+)$, $\Aut(\C_+)$ is very restrictive and in fact realized by the M\"obius transform of matrices in $\text{SL}(2,\R)$. 
Turning attention to $\C \setminus \R$ instead of $\C_+$, the M\"obius transform is extended to $\text{GL}(2,\R)$:
Half planes $\C_\pm$ are preserved individually or switched according to the sign of determinant.

Given a Radon measure $\lambda$ on $\overline{\R}$ and a matrix $A$ in $\text{GL}(2,\R)$,
let $\lambda^A$ be the measure on $\overline{\R}$ defined by
\[
  \lambda^A(dt) = \frac{1}{\det(A)} \frac{(at+b)^2 + (ct+d)^2}{1+t^2} \lambda(A.dt). 
\]
Here $\lambda(A.dt)$ denotes the transferred measure of $\lambda(ds)$ under the change of variables $s = A.t$.

Now we add the following formula for a record, which is an immediate modification of the one in \cite{HY}. 

\begin{Theorem}
  For $A \in \text{GL}(2,\R)$,
  a holomorphic function $\varphi(z)$ of $z \in \C \setminus \R$ admits a representing measure $\lambda$
  if and only if so does the holomorphic function $\varphi(A.z)$ of $z \in \C \setminus \R$.
  
  Moreover, if this is the case, the representing measure of $\varphi(A.z)$ is given by $\lambda^A$.

  Notice that, for a positive $\lambda$, $\lambda^A$ is positive or negative according to the sign of $\det(A)$. 
\end{Theorem}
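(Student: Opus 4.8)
The plan is to verify the transformation formula by a direct computation from the defining integral representation, the equivalence itself reducing to a single implication. It suffices to prove the forward implication $F$: if $\varphi$ admits the representing measure $\lambda$, then the composite $\psi(z) = \varphi(A.z)$ admits $\lambda^A$. Indeed $F$ yields both the ($\Rightarrow$) direction of the stated equivalence and the formula in the ``moreover'' part; the ($\Leftarrow$) direction follows by applying $F$ with $A^{-1}$ in place of $A$ to the relation $\varphi = \psi \circ A^{-1}$ (using $A.(B.z) = (AB).z$), and uniqueness of representing measures, already recorded in the paper, makes the identification unambiguous. Before computing I would note that $\lambda^A$ is a genuine complex Radon measure: its density $\frac{1}{\det(A)}\frac{(at+b)^2+(ct+d)^2}{1+t^2}$ is continuous and nonvanishing on all of $\overline{\R}$ — it tends to $(a^2+c^2)/\det(A) \neq 0$ as $t \to \infty$, and its numerator cannot vanish on $\R$ since $at+b = ct+d = 0$ would force $ad = bc$ — so it is bounded above and below, and $\lambda^A$ is mutually absolutely continuous with the pullback of $\lambda$.

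For the core identity I would substitute $s = A.t$ in
\[
\psi(z) = \frac{\varphi(i)+\varphi(-i)}{2} + \int_{\overline{\R}} \frac{1+s(A.z)}{s - (A.z)}\, \lambda(ds),
\]
using that $t \mapsto A.t$ is a homeomorphism of $\overline{\R}$, so that $\lambda(ds)$ becomes $\lambda(A.dt)$. Two elementary computations control the kernel: for the denominator,
\[
s - (A.z) = A.t - A.z = \frac{\det(A)\,(t-z)}{(ct+d)(cz+d)},
\]
while the numerator gives
\[
1 + (A.t)(A.z) = \frac{N(t,z)}{(ct+d)(cz+d)}, \quad N(t,z) = (a^2+c^2)tz + (ab+cd)(t+z) + (b^2+d^2),
\]
so the common factor $(ct+d)(cz+d)$ cancels and the transformed kernel is the rational function $N(t,z)/(\det(A)(t-z))$.

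The decisive observation is that $N(t,z)$ is affine in $z$ with $N(t,t) = (at+b)^2 + (ct+d)^2 \equiv p(t)$, so its partial-fraction decomposition at the pole $z=t$ reads
\[
\frac{N(t,z)}{\det(A)(t-z)} = -\frac{(a^2+c^2)t + (ab+cd)}{\det(A)} + \frac{1}{\det(A)}\frac{p(t)}{t-z},
\]
the first summand being independent of $z$. Comparing with $\frac{1+tz}{t-z} = -t + \frac{1+t^2}{t-z}$ applied to the candidate measure, one gets $\int_{\overline{\R}} \frac{1+tz}{t-z}\,\lambda^A(dt) = (\mathrm{const}) + \frac{1}{\det(A)}\int_{\overline{\R}} \frac{p(t)}{t-z}\,\lambda(A.dt)$, which has the same $z$-dependent part as $\psi(z)$. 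Hence $\psi(z) = C + \int_{\overline{\R}} \frac{1+tz}{t-z}\,\lambda^A(dt)$ for some constant $C$; evaluating at $z = \pm i$, where the kernel averages to $0$ since $\frac{1+ti}{t-i}+\frac{1-ti}{t+i} = 0$, forces $C = \frac{\psi(i)+\psi(-i)}{2}$, so $\lambda^A$ is exactly the representing measure of $\psi$. The final assertion on signs is immediate, the density being positive with overall sign that of $\det(A)$.

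The routine algebra of the two kernel computations I would suppress. The only point demanding care — hence the main obstacle — is the behaviour at the point $\infty \in \overline{\R}$: one must ensure that the change of variables, the cancellation of $(ct+d)(cz+d)$, and the partial-fraction step are all read correctly as statements about Radon measures on the \emph{compact} space $\overline{\R}$ (in particular when $A.\infty$, $A^{-1}.\infty$, or a pole of the M\"obius map is a real point), not merely as pointwise identities on $\R$. Since the density defining $\lambda^A$ extends continuously and nonvanishingly across $\infty$, this bookkeeping goes through, but it is where the action of $\GL(2,\R)$ on the compactified line genuinely enters.
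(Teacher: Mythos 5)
Your proposal is correct, but it takes a genuinely different route from the paper's. The paper gives no direct computation here: it records the theorem as an immediate modification of the result in \cite{HY}, and the argument it actually sketches (for its appendix treatment of $\text{SL}(2,\R)$) proceeds by generators --- the transformation formula is verified separately for dilations, shifts, and the inversion $z \mapsto -1/z$ via the Stieltjes-type inversion formula, and is then propagated to general $A$ through the cocycle identity $\lambda^{AB} = (\lambda^A)^B$, proved by the chain rule for $\frac{d}{dt}(AB.t)$. You instead compute once and for all, for arbitrary $A \in \GL(2,\R)$, the transported kernel $\frac{1+(A.t)(A.z)}{A.t-A.z} = \frac{N(t,z)}{\det(A)(t-z)}$, identify the measure by uniqueness together with the vanishing of the averaged kernel at $z = \pm i$, and obtain the converse implication for free from $A^{-1}$; this dispenses with both the generator decomposition and the cocycle lemma, and it handles the two signs of $\det(A)$ uniformly, whereas the generator argument lives in $\text{SL}(2,\R)$ and the orientation-reversing case needs a separate remark. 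What the paper's route buys is that each individual verification is trivial; what yours buys is a self-contained, single-identity proof on all of $\GL(2,\R)$. One wording in your partial-fraction step does need repair, at exactly the spot you flagged: the intermediate integrals $\int \frac{p(t)}{t-z}\,\lambda(A.dt)$ and $\int \frac{t\,p(t)}{1+t^2}\,\lambda(A.dt)$ need not converge separately, since both integrands grow like $(a^2+c^2)t$ at $t=\infty$ and so diverge as soon as $\lambda$ charges the point $A.\infty = a/c$. The correct statement is that the difference of the two full kernels,
\[
\frac{N(t,z)}{\det(A)(t-z)} - \frac{1+tz}{t-z}\cdot\frac{p(t)}{\det(A)(1+t^2)}
= \frac{(ab+cd)t^2 + (b^2+d^2-a^2-c^2)t - (ab+cd)}{\det(A)(1+t^2)},
\]
is a bounded continuous function on $\overline{\R}$ independent of $z$ (with value $\frac{ab+cd}{\det(A)}$ at $t=\infty$, matching the affine contribution of the kernel at the point $\infty$), so the two decompositions must be combined \emph{before} integrating against the finite measure $\lambda(A.dt)$; after that the evaluation at $z = \pm i$ proceeds as you wrote. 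Since you explicitly identified the behaviour at $\infty \in \overline{\R}$ as the delicate point and the fix is the routine computation above, this is a presentational matter rather than a gap.
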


\section{Examples}
In this section, we illustrate the results in the previous section by some of simple examples.
At the outset, recall the following fact.

\begin{Lemma}\label{mass}
  If a holomorphic function $\varphi$ on $\C \setminus \R$ admits a representing measure $\lambda$ on $\overline{\R}$,
  then 
  \[
    i(1+x^2)\lambda[x] = \lim_{y \to 0} y\varphi(x+iy)
  \]
  for $x \in \R$ and
  \[
    i\lambda[\infty] = \lim_{y \to \infty} \frac{\varphi(iy)}{y}.
  \]
  Here $\lambda[x]$ denotes the atomic mass $\lambda(\{x\})$ at $x \in \overline{\R}$. 
\end{Lemma}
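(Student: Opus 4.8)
The plan is to read off both atomic masses directly from the defining identity of the representing measure,
\[
  \varphi(w) = \frac{\varphi(i)+\varphi(-i)}{2} + \int_{\overline{\R}} \frac{1+sw}{s-w}\,\lambda(ds),
\]
and to evaluate the two scaled quantities $y\varphi(x+iy)$ and $\varphi(iy)/y$ by a single application of the bounded (dominated) convergence theorem against the finite variation $|\lambda|$ on the compact space $\overline{\R}$. In both cases the additive constant $(\varphi(i)+\varphi(-i))/2$ is annihilated in the limit (multiplied by $y\to 0$, resp.\ divided by $y\to\infty$), so only the integral term contributes.

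The key device is the partial-fraction rewriting
\[
  \frac{1+sw}{s-w} = w + \frac{1+w^2}{s-w},
\]
valid for $s\in\overline{\R}$ and $w\in\C\setminus\R$, with value $w$ at $s=\infty$. For the finite point I would put $w=x+iy$ with $0<y\le 1$, giving the scaled integrand
\[
  y\,\frac{1+sw}{s-w} = yw + \frac{y(1+w^2)}{s-w}.
\]
Since $|y/(s-w)| = y/\sqrt{(s-x)^2+y^2}\le 1$ and $|1+w^2|$ is bounded uniformly in $y\in(0,1]$, this is bounded on $\overline{\R}$ uniformly in $y$. Its pointwise limit as $y\to+0$ is immediate: the term $yw\to 0$, and $y(1+w^2)/(s-w)\to 0$ for every $s\ne x$ (including $s=\infty$), whereas at $s=x$ one has $s-w=-iy$, so that $y(1+w^2)/(-iy)\to (1+x^2)/(-i)=i(1+x^2)$. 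Hence the scaled integrand tends boundedly to $i(1+x^2)\mathbf{1}_{\{x\}}$, and dominated convergence yields $\lim_{y\to+0} y\varphi(x+iy) = i(1+x^2)\lambda(\{x\}) = i(1+x^2)\lambda[x]$.

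For the point at infinity I would argue identically with $w=iy$, $y\ge 1$. The same decomposition gives
\[
  \frac{1}{y}\,\frac{1+sw}{s-w} = i + \frac{1-y^2}{y(s-iy)},
\]
and the estimate $|1-y^2|/\bigl(y\sqrt{s^2+y^2}\bigr)\le |1-y^2|/y^2\le 1$ furnishes a uniform bound for $y\ge 1$. The pointwise limit as $y\to+\infty$ is $0$ at every finite $s$ and equals $i$ at $s=\infty$ (where the kernel has value $iy$), i.e.\ it tends to $i\mathbf{1}_{\{\infty\}}$. Dominated convergence then gives $\lim_{y\to+\infty}\varphi(iy)/y = i\lambda(\{\infty\}) = i\lambda[\infty]$.

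The only delicate point is that the unscaled kernel $(1+sw)/(s-w)$ is singular at $s=x$ as $y\to 0$, so one cannot pass to the limit inside the integral naively; the partial-fraction rewriting is exactly what converts the singular numerator into the harmless bounded factor $y/(s-w)$ and isolates the atomic contribution, while killing every other point of the spectrum. Working throughout on the compactification $\overline{\R}$, where $\lambda$ is a finite Radon measure and the kernel extends continuously with value $w$ at $\infty$, is what legitimizes the bounded convergence argument and simultaneously accounts for the mass at $\infty$.
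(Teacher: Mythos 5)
Your proof is correct, but it takes a genuinely different route from the paper. The paper does not compute anything directly: it invokes the classically known atomic-mass formula for endofunctions (reflection-symmetric extensions with positive representing measure, citing \cite[Appendix A]{HY} and \cite[\S 1.12]{RR}) and extends it to complex measures by linearity, decomposing $\varphi$ into its reflection-symmetric and antisymmetric parts $(\varphi+\varphi^*)/2$ and $(\varphi-\varphi^*)/2i$ with representing measures $(\lambda+\overline{\lambda})/2$ and $(\lambda-\overline{\lambda})/2i$, and then combining the resulting one-sided formulas for $y\varphi(x+iy)$ and $y\varphi(x-iy)$ into the two-sided limit. Your argument instead proves the formula from scratch: the partial-fraction identity $\frac{1+sw}{s-w}=w+\frac{1+w^2}{s-w}$, the uniform bounds $|y|/|s-w|\le 1$ and $|1-y^2|/(y\sqrt{s^2+y^2})\le 1$, and dominated convergence against the finite variation $|\lambda|$ on the compact space $\overline{\R}$ are all verified correctly, and the pointwise limits (in particular $y(1+w^2)/(-iy)=i(1+w^2)\to i(1+x^2)$ at $s=x$, and the value $i$ at $s=\infty$ after division by $y$) are right. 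What your approach buys is a self-contained proof that treats complex $\lambda$ in one stroke, with no appeal to the Herglotz--Riesz--Nevanlinna literature and no Jordan or $*$-decomposition; what the paper's approach buys is brevity, given the classical result. One small point: the lemma asserts the two-sided limit $y\to 0$ at a finite $x$, while you only treat $0<y\le 1$; this costs nothing, since your bound and the evaluation at $s=x$ are independent of the sign of $y$ (indeed $y/(-iy)=i$ for $y<0$ as well), so the identical computation gives the same limit from below, but you should say so explicitly.
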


\begin{proof}
  This is well-known for endofunctions (and their reflection-symmetric extensions) (cf.~\cite[Appendix A]{HY}, \cite[\S 1.12]{RR})
  and the general case follows from this special one as a linear combination of endofunctions. 

 In fact, from the formula 
  \begin{align*}
    \lim_{y \to +0} y \frac{\varphi(x+iy) + \varphi^*(x+iy)}{2} &= i(1+x^2) \frac{\lambda + \overline{\lambda}}{2}[x],\\
    \lim_{y \to +0} y \frac{\varphi(x+iy) - \varphi^*(x+iy)}{2i} &= i(1+x^2) \frac{\lambda - \overline{\lambda}}{2i}[x]
  \end{align*}
 for reflection-symmetric and antisymmetric parts of $\varphi$, one sees 
  \[
    \lim_{y \to +0} y\varphi(x+iy) = i(1+x^2) \lambda[x],
    \quad
     \lim_{y \to +0} y\varphi(x-iy) = -i(1+x^2) \lambda[x], 
 \]
   which are combined into the single formula
   \[
      \lim_{y \to 0} y\varphi(x+iy) = i(1+x^2) \lambda[x]. 
    \]

    Similarly for the atomic mass $\lambda[\infty]$ at $\infty$. 
     \end{proof}

 \begin{Example}\label{tangent}
   Recall that an endofunction $\varphi$ on $\C_+$ gives another endofunction $-1/\varphi$ on $\C_+$.
   Consequently $1/\varphi$ admits a representing measure, simply the negative of the representing measure for $-1/\varphi$.
   If this observation is applied to $\varphi(z) = \tan z$, we see that
   \[
     \tan z + \cot z = \frac{2}{\sin(2z)}
   \]
   has a representing measure.

   In fact, 
    \[
    \tan(x+iy) = \frac{1}{i} \frac{e^{-y+ix} - e^{y-ix}}{e^{-y+ix} + e^{y-ix}}
    = \frac{2\sin(2x) + i(e^{2y} - e^{-2y})}{(e^y - e^{-y})^2 + 4 \cos^2x}
  \]
  reveals that $\tan z$ is an endofunction.
  From $\tan i = i(e - e^{-1})/(e + e^{-1})$, the real constant for $\tan z$ is zero.
  The representing measure $\lambda$ of tangent function is then atomic in view of
  $[\varphi] = \{\pi n/2; n \in 1 + 2\Z\} \sqcup \{ \infty\}$. The atomic mass at $\infty$ is zero because of 
  $\lim_{y \to +\infty} (\tan(iy))/y = 0$, whereas
  $\lim_{z \to \pi n/2} (\pi n/2 - z)\tan z = 1$ shows that the atomic mass at $\pi n/2$ is $1/(1+(\pi n/2)^2)$.
  Consequently we have an absolutely convergent series expansion
  \[
    \tan z = \sum_{n \in 1 + 2\Z} \frac{1 + \pi nz/2}{\pi n/2 - z} \frac{1}{1 + (\pi n/2)^2}
  \]
  for $z \in \C \setminus (\pi/2)(1 + 2\Z)$.
  
  Similarly the boundary support of $\cot z$ is $(\pi \Z) \sqcup \{\infty\}$ and
  the atomic mass at $n\pi$ given by $-1/(1 + (\pi n)^2)$ with no atomic mass at $\infty$.

  Thus the boundary support of $1/\sin(2z)$ is $(\pi/2)\Z \sqcup \{\infty\}$ with the atomic mass at $\pi n/2$ equal to
  $(-1)^{n+1}/(1+ (\pi n/2)^2)$: We have an absolutely convergent expression 
  \[
    \frac{2}{\sin(2z)} = \sum_{n \in \Z} \frac{(-1)^{n+1}}{1 + (\pi n/2)^2} \frac{ 1+ \pi n z/2}{\pi n/2 -z}
  \]
  for $z \not\in (\pi/2)\Z$.

  By scaling and then shifting, both $1/\sin z$ and $1/\cos z$ have representing measures.
  As a conclusion, trigonometric functions admit representing measures other than entire ones, $\cos z$ and $\sin z$. 
 \end{Example}

 \begin{Example}[cf.~\cite{Do, Na}]
   Consider a rational function $\varphi$ which is holomorphic on $\C \setminus \R$, i.e., all poles of $\varphi$ are real
   (including $\infty$). If $\varphi$ has a representing measure, it behaves simply at any point of the boundary $\overline{\R}$,
   whence all poles of $\varphi$ must be simple. Then the partial fraction decomposition of $\varphi$ takes the form
   \[
     \varphi(z) = az+b + \sum_{j=1}^n \frac{c_j}{s_j - z}, 
   \]
   where $\{ s_j\}$ is the set of real poles (excluding $\infty$) and $a$, $b$, $c_j \not= 0$ are complex numbers.
   If we compare this with the representing measure relation 
   \[
     \varphi(z) = \frac{\varphi(i) + \varphi(-i)}{2} + \lambda[\infty] z + \sum_{j=1}^n \frac{1+s_jz}{s_j-z} \lambda[s_j], 
   \]
   we have
   \[
     c_j = (1+s_j^2) \lambda[s_j],\quad   a = \lambda[\infty],\quad
     b = \frac{\varphi(i) + \varphi(-i)}{2} -  \sum_j s_j \lambda[s_j], 
   \]
   which in turn determines $\lambda[s_j]$, $\lambda[\infty]$ and $(\varphi(i)+\varphi(-i))/2$ uniquely from
   $a$, $b$ and $(c_j)$ so that
   the obtained atomic measure $\lambda$ supported by $\{s_j\} \sqcup \{ \infty\}$ meets the relation of the representing measure. 

   In that case, $\deg \varphi \in \{ -1,0,1\}$ and
   \[
     \deg \varphi =
     \begin{cases}
       1 &(\lambda[\infty] \not= 0)\\
       0 &\text{($\lambda[\infty] = 0$ and $b \not= 0$)}\\
       -1 &\text{($\lambda[\infty] = 0$ and $b = 0$)}
     \end{cases}.
   \]
 \end{Example}

 In the following examples, the principal branch of logarithm is used and denoted by $\log z$:
 $\log(re^{i\theta}) = \log r + i\theta$ for $r>0$ and $-\pi < \theta <\pi$.
 For $p \in \C$, the $p$-th power function $z$ is then defined by $z^p = e^{p\log z}$ ($z \in \C \setminus (-\infty,0]$).
 The logarithm is real in the sense that $\log^* = \log$ and
 the power function satisfies $(z^p)^* = z^{\overline{p}}$, i.e., $\overline{z^p} = (\overline{z})^{\overline{p}}$. 
 For the reciprocal, $\log(1/z) = -\log z$ and $(1/z)^p = 1/z^p$.
 For the inversion $-1/z$, the boundary support is changed from $[-\infty,0]$ to $[0,\infty]$ so that 
 \[
   \log(-1/z) = \begin{cases}
                         i\pi - \log z &(0 < \theta < \pi)\\
                         -i\pi - \log z &(-\pi < \theta < 0)
                \end{cases}
              \]
 and 
 \[
  (-1/z)^p =
  \begin{cases}
    e^{i\pi p}/z^p &(0 < \theta < \pi)\\
    e^{-i\pi p}/z^p &(-\pi < \theta < 0)
  \end{cases}. 
 \]
 
 \begin{Example}\label{power}
   $z^p$ ($p \in \C \setminus \Z$).
   The boundary support is $[-\infty,0]$ and the power function $z^p$ behaves simply except for $\{ 0,\infty\}$.
   In view of
   \[
     yz^p = r^{1+p} e^{ip\theta} \sin\theta \quad(z = x + iy), 
   \]
   it behaves simply at $0$ if and only if $1+\text{Re}\,p \geq 0$, whereas
   \[
     y(-1/z)^p = e^{\pm i\pi p} r^{1-p} e^{-ip\theta}\sin\theta 
   \]
   according to $\pm y>0$ shows that the simple behavior holds at $\infty$ if and only if $1-\text{Re}\,p \geq 0$.

   In total, $z^p$ behaves simply on $\overline{\R}$ if and only if $-1 \leq \text{Re}\,p \leq 1$.

   In view of $|z^p| = |z|^{\text{Re}\,p} e^{-(\text{Im}\,p) \theta}$, $z^p$ is bounded near $z = 0$ for $\text{Re}\,p \geq 0$ and
   $|z^p| \leq |x|^{\text{Re}\,p} e^{\pi|\text{Im}\,p|}$ for $\text{Re}\, p < 0$, which are combined with
   \[
     \lim_{y \to +0} (x\pm iy)^p =
     \begin{cases}
       x^p &(x>0),\\
       |x|^p e^{\pm i\pi p} &(x<0)
     \end{cases}
   \]
   to see that, if $\text{Re}\,p > -1$, we have the Radon measure convergence
   \[
     \lim_{y \to +0} \int_\R f(x) (x\pm iy)^p\, dx = \int_0^\infty f(x) x^p\, dx + e^{\pm i\pi p}\int_{-\infty}^0 f(x) (-x)^p\, dx 
   \]
   for $f \in C_c(\R)$.
   
   Consequently, if $-1 < \text{Re}\,p < 1$, the measure convergence
   \[
     \lim_{y \to +0} \Bigl((x+iy)^p - (x-iy)^p\Bigr) dx = 2i (-x)^p \sin(\pi p)\, dx
   \]
   on $(-\infty,0)$ shows that the boundary measure exists on $\R^\times$,
   whence by Theorem~\ref{main} the representing measure of $z^p$ supported by $(-\infty,0)$ exists and is given by
   \[
     \frac{(-x)^p}{\pi(1+x^2)} \sin(\pi p)\, dx
     \quad
     (x < 0) 
   \]
   without atomic measures at $0$ and $\infty$. 
   Note that the representing measure is positive or negative according to $0 < p < 1$ or $-1 < p < 0$.

   For $p = 1 +iq$ with $0 \not= q \in \R$,
   \[
     \frac{e^{iy}}{y} = r^{iq} e^{-\pi q/2}
   \]
   circulates on the circle of radius $e^{-\pi q/2}$ and does not converge as $r \to \infty$,
   whence $\varphi$ has no representing measure.
   Similarly for $p = -1 + iq$ with $0 \not= q \in \R$.


   As a conclusion, the power function $z^p$ ($p \not\in \Z$) has a representing measure if and only if $-1 < \text{Re}\,p < 1$.
   Note that, for $p \in\Z$, $z^p$ admits a representing measure if and onif $p \in \{-1,0,1\}$. 
\end{Example}

\begin{Example}
  $z^p \log z$ ($p \in \C$). 
  The boundary support is $[-\infty,0]$ again and $z^p\log z$ behaves simply on $\overline{\R}$ if and only if $-1 < \text{Re}\,p < 1$.
  In that case, the representing measure supported by $(-\infty,0)$ exists without atomic mass and is given by
  \[
   \left( \frac{\sin\pi p}{\pi} \log|x| + \cos \pi p \right) \frac{|x|^p}{1+x^2}\, dx
 \]
 on $ (-\infty,0)$ with the following integral representation of $z^p\log z$:
 \[
   -\pi \sin\left(\frac{\pi}{2}p\right)
   + \int_{-\infty}^0 \frac{1+sz}{s-z}
   \left( \frac{\sin\pi p}{\pi} \log(-s) + \cos \pi p \right) \frac{(-s)^p}{1+s^2}\, ds.
\]

 Note that, if $-1 < p < 1$, the density factor
 \[
   \frac{\sin\pi p}{\pi} \log(-s) + \cos \pi p
 \]
 is monotone on $(-\infty,0)$ and changes its sign at $s = - e^{-\pi \cot(\pi p)}$ unless $p = 0$. 
\end{Example}

\begin{Example}
  $z^p/\log z$ ($p \in \C$). 
  The boundary support is $[-\infty,0] \sqcup \{1\}$ and
  $z^p/\log z$ behaves simply on $\overline{\R}$ if and only if $-1 \leq \text{Re}\,p \leq 1$.
  The function is meromorphic on $\C \setminus (-\infty,0]$ with $1$ unique pole and of multiplicity one. The mass at $1$ is then 
  \[
    \frac{1}{i(1+1)} \lim_{y \to +0} \frac{y(1 + iy)^p}{\log(1+iy)} = - \frac{1}{2}. 
  \]
  The function admits boundary limits on $\R \setminus \{ 0, 1, \infty\}$ so that 
  \[
    \lim_{y \to +0} \frac{(x\pm iy)^p}{\log(x\pm iy)}
    =
    \begin{cases}
      x^p/\log x &( 1 \not= x > 0),\\
      (-x)^pe^{\pm i\pi p}/(\log(-x) \pm i\pi) &(x < 0). 
    \end{cases}
  \]
  By a similar argument as in the case of power functions, this implies 
  \begin{multline*}
    \lim_{y \to +0} \left( \frac{(x\pm iy)^p}{\log(x\pm iy)} - \frac{(x\pm iy)^p}{\log(x\pm iy)} \right) dx\\
    = \Bigl( 2i\sin(\pi p) \log(-x) - 2\pi i \cos(\pi p) \Bigr) \frac{(-x)^p}{\pi^2 + (\log(-x))^2} dx
  \end{multline*}
  as a Radon measure on $(-\infty,0)$, which
  makes $1/(1+x^2)$ integrable if and only if $-1 < \text{Re}\, p < 1$.

  Conversely suppose that $p$ is in this strip region.
  Given $0 < \epsilon < 1$, we can find $M_\epsilon > 0$ so that
  \[
    \left| \frac{z^p}{\log z} \right|
    \leq M_\epsilon \frac{|x|^{\text{Re}\,p}}{-\log |x|}
  \]
  for $z = x+iy$ with $-1+\epsilon \leq x \leq 1-\epsilon$ and $0 < y \leq \epsilon)$.
  Since $|x|^{\text{Re}\, p}/\log|x|$ is integrable on $[-1+\epsilon,1-\epsilon]$ by the assumption,
  the dominated convergence theorem is applied to see that 
  \[
    \lim_{y \to +0} \frac{(x\pm iy)^p}{\log(x \pm iy)} dx =
   (0,1-\epsilon] \frac{x^p}{\log x} dx + [-1+\epsilon,0) e^{\pm i\pi p} \frac{(-x)^p}{\log(-x) \pm i\pi} dx 
  \]
  as a Radon measure on $[-1+\epsilon,1-\epsilon]$, whence
  \begin{multline*}
    \lim_{y \to +0} \left( \frac{(x\pm iy)^p}{\log(x\pm iy)} - \frac{(x\pm iy)^p}{\log(x\pm iy)} \right) dx\\
    = \Bigl( 2i\sin(\pi p) \log(-x) - 2\pi i \cos(\pi p) \Bigr) \frac{(-x)^p}{\pi^2 + (\log(-x))^2} dx - i \delta(x-1)
  \end{multline*}
  as a Radon measure on $\R$. 

  Thus $z^p/\log z$ admits a representing measure by Theorem~\ref{main}.

  As a conclusion, $z^p/\log z$ admits a representing measure if and only if $-1 < \text{Re}\, p < 1$ and,
  if this is the case, the boundary integral representation is given by 
  \begin{multline*}
    \frac{z^p}{\log z} = \frac{2}{\pi} \sin\left(\frac{\pi}{2} p\right)  - \frac{1}{2} \frac{1+z}{1-z}\\
    +  \int_{-\infty}^0
    \frac{1+sz}{s-z} \left(\frac{\sin(\pi p)}{\pi} \log(-s) - \cos(\pi p)\right)
    \frac{(-s)^p}{(1+s^2)(\pi^2 + (\log(-s))^2)}\, ds.  
  \end{multline*}
  Note that 
  \[
   \frac{1}{2} \left( \frac{i^p}{\log i} + \frac{(-i)^p}{\log(-i)} \right) = \frac{2}{\pi} \sin\left(\frac{\pi}{2}p\right). 
  \]
 \end{Example}

\begin{figure}[h]
\centering
\begin{minipage}[b]{0.49\columnwidth}
    \centering
    \includegraphics[width=0.6\columnwidth]{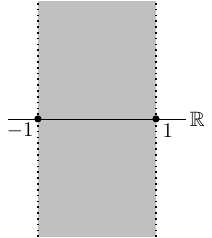}
    \caption{$p$ region of $z^p$}
\end{minipage}
\begin{minipage}[b]{0.49\columnwidth}
    \centering
    \includegraphics[width=0.6\columnwidth]{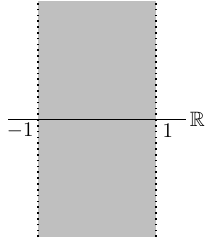}
    \caption{$p$ region of $z^p(\log z)^{\pm 1}$}
\end{minipage}
\end{figure}


\begin{Example}\label{powerbolic}
  As a combination of Example~\ref{tangent} and Example~\ref{power},
  given $\sigma > 0$, consider real (reflection-symmetric) meromorphic functions
  \[
    \tan( \sigma \log z) = -i \frac{z^{i\sigma} - z^{-i\sigma}}{z^{i\sigma} + z^{-i\sigma}},
    \quad
    - \cot(\sigma \log z) = -i \frac{e^{i\sigma} + z^{-i\sigma}}{z^{i\sigma} - z^{-i\sigma}}
  \]
  of $z \in \C \setminus (-\infty,0]$, which are (reflection-symmetric extensions of) endofunctions as compositions of endofunctions.
  In view of
  \[
    z^{i\sigma} = - z^{-i\sigma} \iff \log z \in \frac{\pi}{2\sigma} + \frac{\pi}{\sigma}\Z
  \]
  and
  \[
    z^{i\sigma} = z^{-i\sigma} \iff \log z \in \frac{\pi}{\sigma} \Z,
  \]
  these have simple poles at
  $e^{\pi(1+2k)/2\sigma}$ ($k \in \Z$) and $e^{\pi k/\sigma}$ ($k \in \Z$) respectively. Thus the meromorphic function 
  \[
    \varphi(z) = \frac{4i}{z^{2i\sigma} - z^{-2i\sigma}} = \frac{2}{\sin(2\sigma\log z)}
    = \tan(\sigma \log z) + \cot(\sigma \log z)
  \]
  of $z \in \C \setminus (-\infty,0]$ admits a representing measure as their difference. 
 
  Atomic masses of these functions are calculated by Lemma~\ref{mass}: Recalling the polar form expression 
  \[
  (x + iy)^{\pm i\sigma} = r^{\pm i\sigma} e^{\mp \sigma \theta} \quad (-\pi < \theta <\pi) 
  \]
  and the asymptotics
  \begin{align*}
    (e^{\pi n/2\sigma}+iy)^{\pm i\sigma}
    &= i^{\pm n} (1 + i e^{-\pi n/2\sigma} y)^{\pm i\sigma}\\
    &= i^{\pm n} \Bigl( 1 \mp \sigma e^{-\pi n/2\sigma}y \pm \frac{i\sigma(1 \mp i\sigma)}{2} e^{-\pi n/\sigma} y^2 + \cdots \Bigr)
  \end{align*}
  as $y \to 0$, we see 
 \begin{align*}
    \lim_{y \to 0} y \tan\Bigl(\sigma \log(e^{\pi n/2\sigma}+iy) \Bigr)
    &= \frac{i}{\sigma} e^{\pi n/2\sigma}
      \quad (n \in 1 + 2\Z),\\
       \lim_{y \to 0} y \cot\Bigl(\sigma \log(e^{\pi n/2\sigma}+iy) \Bigr)
    &= - \frac{i}{\sigma} e^{\pi n/2\sigma}
      \quad (n \in 2\Z)
 \end{align*}
 and  
 \[
   \lim_{y \to \infty} \frac{1}{y}\tan\Bigl(\sigma \log(iy) \Bigr)
   = \lim_{y \to \infty} \frac{1}{y}\cot\Bigl(\sigma \log(iy) \Bigr) = 0. 
 \]
 
 Thus there is no atomic mass at either $0$ or $\infty$ with the atomic mass at $e^{\pi n/2\sigma}$ ($n \in \Z$) given by 
 \[
   (-1)^{n+1} \frac{1}{\sigma} \frac{1}{e^{\pi n/2\sigma} + e^{-\pi n/2\sigma}}. 
 \]

  From expressions
 \begin{align*}
   \tan(\sigma \log z)
   &= \frac{4\cos(\sigma\log r) \sin(\sigma\log r) + i(e^{2\sigma\theta} - e^{-2\sigma\theta})}
     {(e^{\sigma\theta} + e^{-\sigma\theta})^2 \cos^2(\sigma\log r) + (e^{\sigma\theta} - e^{-\sigma\theta})^2 \sin^2(\sigma\log r)}\\
   \cot(\sigma \log z)
    &= \frac{4\cos(\sigma\log r) \sin(\sigma\log r) - i(e^{2\sigma\theta} - e^{-2\sigma\theta})}
     {(e^{\sigma\theta} - e^{-\sigma\theta})^2 \cos^2(\sigma\log r) + (e^{\sigma\theta} + e^{-\sigma\theta})^2 \sin^2(\sigma\log r)}
 \end{align*}
 in terms of the polar form $z = x + iy = re^{i\theta}$ ($r>0$, $-\pi < \theta < \pi$), their boundary limits on $(-\infty,0)$
 are obtained by letting $r = |x|$ and $\theta = \pm \pi$,
 whence their representing measures $\lambda_{\tan}$ and $\lambda_{\cot}$ take the following form
 \begin{align*}
 \pi(1+x^2) \lambda_{\tan}(dx) &= \frac{e^{2\pi \sigma} - e^{-2\pi\sigma}}
                      {(e^{\pi\sigma} + e^{-\pi\sigma})^2 \cos^2(\sigma\log |x|) + (e^{\pi\sigma} - e^{-\pi\sigma})^2 \sin^2(\sigma\log |x|)} dx\\
 \pi(1+x^2) \lambda_{\cot}(dx) &= \frac{e^{-2\pi\sigma} - e^{2\pi\sigma}}
     {(e^{\pi\sigma} - e^{-\pi\sigma})^2 \cos^2(\sigma\log |x|) + (e^{\pi\sigma} + e^{-\pi\sigma})^2 \sin^2(\sigma\log |x|)} dx
 \end{align*}
 on $(-\infty,0)$ and
 \begin{align*}
   \lambda_{\tan}[e^{\pi n/2\sigma}] &= \frac{1}{\sigma} \frac{1}{e^{\pi n/2\sigma} + e^{-\pi n/2\sigma}} (n \in 1 + 2\Z),\\
   \lambda_{\cot}[e^{\pi n/2\sigma}] &= - \frac{1}{\sigma} \frac{1}{e^{\pi n/2\sigma} + e^{-\pi n/2\sigma}} (n \in  2\Z). 
 \end{align*}
 Note that
 \[
   \tan(\sigma\log i) = i \tanh \frac{\pi\sigma}{2},
   \quad
   \cot(\sigma\log i) = -i \coth \frac{\pi\sigma}{2}
 \]
 and their reflection-symmetric constants vanish. 
\end{Example}
 
\appendix
\section{}
We reproduce here Vladimirov's estimate on the boundary behavior of holomorphic endofunctions on $\C_+$.
The original estimate is about functions of multiple variables and the single variable case below is therefore considerably simplified.

\begin{Theorem}[Vladimirov]\label{Vl}
 For an endofunction $\varphi$ on $\C_+$, 
  \[
    |\varphi(z)| \leq \frac{1+\sqrt{2}}{2} \frac{1+|z|^2}{\text{Im}\,z} |\varphi(i)|. 
  \]
\end{Theorem}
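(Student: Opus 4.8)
The plan is to deduce the estimate directly from the Schwarz--Pick lemma on the upper half plane, using $i$ as the base point. Via the Cayley transform $T(w)=(w-i)/(w+i)$ carrying $\C_+$ onto the unit disk with $T(i)=0$, the endofunction $\varphi$ becomes a holomorphic self-map of the disk, so the pseudohyperbolic distance is contracted. Concretely, applying the two-point form of Schwarz--Pick to the pair $z,i\in\C_+$ gives
\[
  \left|\frac{\varphi(z)-\varphi(i)}{\varphi(z)-\overline{\varphi(i)}}\right|
  \le \left|\frac{z-i}{z+i}\right| =: \rho < 1 \qquad (z\in\C_+),
\]
which is legitimate because $\varphi(i)\in\C_+$, so that $\overline{\varphi(i)}\neq\varphi(i)$ and the denominator on the left never vanishes.

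Next I would solve this inequality for the modulus $|\varphi(z)|$. Writing $a=\varphi(i)$ and letting $s$ denote the quotient on the left, so that $|s|\le\rho$, one has $\varphi(z)=(a-s\overline a)/(1-s)$ and hence
\[
  |\varphi(z)| \le |a|\,\frac{1+|s|}{1-|s|} \le |\varphi(i)|\,\frac{1+\rho}{1-\rho},
\]
the last step because $t\mapsto (1+t)/(1-t)$ increases on $[0,1)$. Geometrically this is just the statement that the farthest point of the Apollonius disk $\{\,b:|(b-a)/(b-\overline a)|\le\rho\,\}$ from the origin lies at distance at most $|a|(1+\rho)/(1-\rho)$.

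It remains to rewrite $(1+\rho)/(1-\rho)$ in terms of $\text{Im}\,z$ and $|z|^2$. From $z=x+iy$ one computes the foci identities $|z+i|^2+|z-i|^2=2(1+|z|^2)$, $|z+i|^2-|z-i|^2=4\,\text{Im}\,z$ and $|z+i|\,|z-i|=|z^2+1|$, so that
\[
  \frac{1+\rho}{1-\rho}
  = \frac{(|z+i|+|z-i|)^2}{|z+i|^2-|z-i|^2}
  = \frac{2(1+|z|^2)+2|z^2+1|}{4\,\text{Im}\,z}.
\]
Since $|z^2+1|\le |z|^2+1$, this is at most $(1+|z|^2)/\text{Im}\,z$, yielding
\[
  |\varphi(z)| \le \frac{1+|z|^2}{\text{Im}\,z}\,|\varphi(i)|,
\]
and the claimed inequality follows a fortiori because $(1+\sqrt2)/2>1$.

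I expect the only genuine subtlety to be the constant. The pseudohyperbolic argument produces the sharp shape $(1+|z|^2)/\text{Im}\,z$ with a leading constant governed by $\sup_{z\in\C_+}|z^2+1|/(1+|z|^2)=1$ (approached as $\text{Im}\,z\to 0$ along the real axis), which already improves on Vladimirov's $(1+\sqrt2)/2$; matching the stated constant is therefore immediate. If instead one wishes to reproduce Vladimirov's multivariable route faithfully, the $\sqrt2$ would enter through a cruder majorization of $|z\pm i|$ rather than through the sharp foci identity used above, but for the single-variable statement the Schwarz--Pick estimate is both shorter and strictly stronger.
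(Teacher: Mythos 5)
Your argument is correct, and it takes a genuinely different route from the paper's — indeed it proves a strictly stronger inequality. The paper transfers to the disk: it normalizes the accompanied function $\phi$ with $\text{Re}\,\phi>0$, applies the plain Schwarz lemma, and then splits off $i\,\text{Im}\,\phi(0)$ by the triangle inequality to reach $|\phi(z)|\le 2|\phi(0)|/(1-|z|)$, i.e.\ $|\varphi(w)|\le 2|\varphi(i)|/(1-\rho)$ with $\rho=|{(i-w)/(i+w)}|$; the constant $\frac{1+\sqrt{2}}{2}$ then arises from bounding $|i+w|^2+|1+w^2|\le(1+\sqrt{2})(1+r^2)$ by maximizing $F(t)=t+\sqrt{(r+r^{-1})^2/4-t^2}$, the cross term $2r\sin\theta$ in $|i+w|^2$ being what forces the $\sqrt{2}$. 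You stay in the half plane, use the two-point invariant Schwarz--Pick inequality, and estimate the Apollonius disk sharply: from $b=(a-s\overline{a})/(1-s)$ with $|s|\le\rho$ you get the factor $(1+\rho)/(1-\rho)$ in place of the paper's $2/(1-\rho)$, and your foci identities $|z+i|^2+|z-i|^2=2(1+|z|^2)$, $|z+i|^2-|z-i|^2=4\,\text{Im}\,z$, $|z+i|\,|z-i|=|z^2+1|$ are exact, so no maximization is needed and the single inequality $|z^2+1|\le 1+|z|^2$ yields the constant $1<\frac{1+\sqrt{2}}{2}$, from which the stated bound follows a fortiori. I checked the details: the Schwarz--Pick form on $\C_+$ is correct, the denominator $\varphi(z)-\overline{\varphi(i)}$ never vanishes since $\varphi(z)\in\C_+$ while $\overline{\varphi(i)}\in\C_-$, the solve for $b$ and the estimates $|a-s\overline{a}|\le|a|(1+|s|)$, $|1-s|\ge 1-|s|$ are valid, and the monotonicity of $t\mapsto(1+t)/(1-t)$ closes the gap between $|s|$ and $\rho$. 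What each approach buys: yours is shorter, avoids the calculus step entirely, and sharpens the theorem — the paper's two lossy moves (the triangle-inequality split $|\phi|\le|\text{Im}\,\phi(0)|+|\phi-i\,\text{Im}\,\phi(0)|$ and the maximization of $F$) are precisely what your Apollonius estimate and exact identities eliminate; the paper's route, on the other hand, deliberately mirrors Vladimirov's original multivariable scheme (its stated purpose is to reproduce and simplify that estimate, improving his coefficient $\sqrt{2}$ to $\frac{1+\sqrt{2}}{2}$) and uses only the classical one-point Schwarz lemma rather than the two-point invariant form.
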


\begin{proof}
  Let $\phi(z)$ be the accompanied function on $D$ satisfying $\text{Re}\, \phi(z) > 0$ ($z \in D$).
  Then
  \[
   \psi = \frac{\phi - i \text{Im}\,\phi(0) - \text{Re}\,\phi(0)}{\phi - i \text{Im}\,\phi(0) + \text{Re}\,\phi(0)}
  \]
  is a normalized endofunction on $D$ and the Schwarz lemma gives $|\psi(z)| < |z|$ ($z \in D$).
  Thus
  $\phi - i\text{Im}\, \phi(0) = (\text{Re}\,\phi(0))(1+\psi)/(1-\psi)$ satisfies
  \[
    \text{Re}\,\phi(0) \frac{1 - |z|}{1 + |z|} \leq
    |\phi(z) - i \text{Im}\,\phi(0)|
    \leq \text{Re}\,\phi(0) \frac{1+|\psi(z)|}{1-|\psi(z)|}
    \leq \text{Re}\,\phi(0) \frac{1+|z|}{1-|z|}
  \]
  for $z \in D$ and hence 
  \[
    |\phi(z)| \leq |\text{Im}\,\phi(0)| + \text{Re}\,\phi(0) \frac{1+|z|}{1-|z|} 
    \leq |\phi(0)| \left(1 + \frac{1+|z|}{1-|z|}\right) = \frac{2|\phi(0)|}{1 - |z|}. 
  \]
  Consequently, in view of $\varphi(w) = i\phi(z)$ and $z = (i-w)/(i+w)$, we have
  \[
    | \varphi(w)| \leq \frac{2|\varphi(i)|}{1 - |\frac{i-w}{i+w}|}
  \]
  for $w \in \C_+$.

  The problem is therefore reduced to estimating the denominator. Let $w = u+iv$ ($u \in \R$, $v > 0$). 
  \[
    1 - \left| \frac{i-w}{i+w} \right| = \frac{|i+w|^2 - |i-w|^2}{|i+w|^2 + |i^2 - w^2|} = \frac{4v}{|i+w|^2 + |1+w^2|}.  
  \]
  In terms of the polar form $w = r e^{i\theta}$,
  \[
    |i+w|^2 + |1+w^2| = 1 + r^2 + 2r \sin\theta + \sqrt{(1+r^2)^2 - 4r^2 \sin^2\theta}
  \]
  Since
  \[
    F(t) = t + \sqrt{(r + r^{-1})^2/4 - t^2}
  \]
  for $0 \leq t \leq (r+r^{-1})/2$ is maximized at $t = (r + r^{-1})/2\sqrt{2}$ with the maximum
  \[
    F\left(\frac{r + r^{-1}}{2\sqrt{2}}\right) = \frac{r + r^{-1}}{\sqrt{2}}, 
  \]
  we have $|i+w|^2 + |1+w^2| \leq (1+\sqrt{2})(1+r^2)$.
%
\end{proof}

\begin{Remark}
 Vladimirov derived the estimate in \cite[Lemma~13.3.2]{Vl}, where the coefficient $\frac{1+\sqrt{2}}{2}$ is overestimated by $\sqrt{2}$. 
\end{Remark}

\begin{Corollary}
  If a holomorphic function $\varphi$ on $\C_+$ admits a representing measure, then
  \[
  \| \varphi\|_V \equiv \sup \{ \frac{\text{Im}\,z}{1+|z|^2} |\varphi(z)|; z \in \C_+\} < \infty.
  \]
\end{Corollary}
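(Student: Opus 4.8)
The plan is to reduce the estimate to Vladimirov's Theorem~\ref{Vl} by splitting the representing measure into its positive pieces. Starting from the defining relation
\[
  \varphi(w) = \frac{\varphi(i) + \varphi(-i)}{2} + \int_{\overline{\R}} \frac{1+sw}{s-w}\, \lambda(ds)
  \qquad (w \in \C_+),
\]
I would first record that the kernel $k_s(w) = (1+sw)/(s-w)$ extends continuously in $s$ to all of the compact space $\overline{\R}$, with $k_\infty(w) = w$, and that a direct computation gives
\[
  \text{Im}\, k_s(w) = \frac{(1+s^2)\, \text{Im}\,w}{|s-w|^2} > 0
  \qquad (s \in \overline{\R},\ w \in \C_+).
\]
Hence, for any \emph{positive} finite Radon measure $\nu$ on $\overline{\R}$, the function $w \mapsto \int_{\overline{\R}} k_s(w)\, \nu(ds)$ is holomorphic on $\C_+$ (continuity of $s \mapsto k_s(w)$ on a compact set justifies differentiation under the integral) and maps $\C_+$ into $\C_+$, i.e.\ it is an endofunction unless $\nu = 0$.

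Next I would decompose. Writing $\lambda$ in terms of its real and imaginary parts and applying the Jordan decomposition to each, one obtains positive finite measures with $\lambda = (\lambda_1 - \lambda_2) + i(\lambda_3 - \lambda_4)$. Setting $\psi_j(w) = \int_{\overline{\R}} k_s(w)\, \lambda_j(ds)$, each $\psi_j$ is an endofunction (or identically zero, in which case the bound is trivial), so Theorem~\ref{Vl} applies and gives
\[
  \frac{\text{Im}\,w}{1+|w|^2}\, |\psi_j(w)| \leq \frac{1+\sqrt{2}}{2}\, |\psi_j(i)| < \infty,
\]
that is $\|\psi_j\|_V < \infty$ for $j = 1,\dots,4$.

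Finally I would assemble the pieces using that $\|\cdot\|_V$ is subadditive. Since $1 + |w|^2 \geq 2|w| \geq 2\,\text{Im}\,w$ for $w \in \C_+$, the constant term contributes at most $\tfrac12\, |(\varphi(i)+\varphi(-i))/2|$, and therefore
\[
  \|\varphi\|_V \leq \frac{1}{2}\, \left| \frac{\varphi(i)+\varphi(-i)}{2} \right| + \sum_{j=1}^{4} \|\psi_j\|_V < \infty.
\]
The only point demanding genuine care is the verification that each $\psi_j$ really is an endofunction, so that Theorem~\ref{Vl} may be invoked: this rests on the strict positivity of $\text{Im}\,k_s$ together with the correct treatment of the point $s = \infty$ in $\overline{\R}$, the rest of the argument being routine manipulation with the subadditive seminorm $\|\cdot\|_V$.
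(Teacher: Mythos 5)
Your proof is correct and follows essentially the route the paper intends: the paper states (in Section 1) that functions admitting representing measures are exactly linear combinations of Pick functions via the Jordan decomposition, and the Corollary is then an immediate consequence of Theorem~\ref{Vl} applied to each positive piece, which is precisely your argument. Your verification that $\text{Im}\,k_s(w) = (1+s^2)\,\text{Im}\,w/|s-w|^2 > 0$ (including the point $s=\infty$), the bound $\tfrac{1}{2}|c|$ for the constant term, and the subadditivity of $\|\cdot\|_V$ supply exactly the details the paper leaves implicit.
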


\section{}
For reader's convenience, we present a known fact that simple boundary behavior implies the existence of distributional boundary limits
according to \cite[Chap.~11]{BB}.

Consider a box region $B_\pm = (a,b) \pm i(0,\delta]$ in the upper/lower half plane $\C_\pm$, 
let $\varphi$ be a holomorphic function defined on an open neighborhood of $B_\pm$ in $\C_\pm$ 
and assume that $y^m\varphi(x\pm iy)$ ($a < x < b$, $0 < y \delta$) is bounded on $B_\pm$ for some integer $m \geq 0$.

We shall show that the boundary limit
\[
  \varphi(x\pm i0) = \lim_{y \to +0} \varphi(x \pm iy)
\]
exists as a distribution in $(a,b)$ and $\varphi(x\pm i0)$ ($a < x < b$) belongs to the dual of $C_0^{m+1}(a,b)$ in such a way that
\begin{multline*}
  \int_a^b f(x) \varphi(x\pm i0)\, dx = \sum_{k=0}^m \frac{(\pm i\delta)^k}{k!} \int_a^b f^{(k)}(x) \varphi(x\pm i\delta)\, dx\\
  + \frac{(\pm i)^{m+1}}{m!} \int_a^bdx\, f^{(m+1)}(x) \int_0^\delta t^m \varphi(x \pm it)\, dt.
\end{multline*}

By reflection symmetry, we may restrict ourselves to $B_+$. 
Let $f \in C_0^{m+1}(a,b)$, i.e., $f$ is a continuous function in the class $C^{m+1}$ on $(a,b)$ and
each $f^{(k)}$ satisfies $f^{(k)}(a+0) = 0 = f^{(k)}(b-0)$ for $0 \leq k \leq m+1$.  

Regard $\varphi(x+iy)$ ($0 < y < \delta$ with $\delta' > \delta$) as a linear functional $\varphi_y(f)$ of $f$:
\[
  \varphi_y(f) = \int_a^b f(x) \varphi(x+iy)\, dx.
\]
Clearly $\varphi_y(f)$ is a $C^\infty$ (in fact analytic) function of $0 < y < \delta'$ and
\begin{align*}
  \frac{d^k}{dy^k} \varphi_y(f)
  &= \int_a^b f(x) \frac{\partial^k}{\partial y^k} \varphi(x+iy)\, dx\\
  &= i^k \int_a^b f(x) \frac{\partial^k}{\partial x^k} \varphi(x+iy)\, dx\\
  &= (-i)^k \int_a^b f^{(k)}(x) \varphi(x+iy)\, dx
\end{align*}
for $0 \leq k \leq m+1$.

By the fundamental formula in calculus,
\begin{align*}
  \varphi_y(f)
  &= \sum_{k=0}^m \frac{(y-\delta)^k}{k!} \left.\frac{d^k}{dt^k} \varphi_t(f)\right|_{t=\delta}
    + \frac{1}{m!} \int_\delta^y (y-t)^m \frac{d^{m+1}}{dt^{m+1}} \varphi_t(f)\, dt\\
  &= \sum_{k=0}^m \frac{(y-\delta)^k}{k!} (-i)^k \int_a^b f^{(k)}(x) \varphi(x+i\delta)\, dx\\
  &\qquad\qquad+ \frac{i^{m+1}}{m!} \int^\delta_y dt\, (t-y)^m \int_a^b f^{(m+1)}(x) \varphi(x+it)\, dx
\end{align*}
and the limit $y \to +0$ question is narrowed down to the last integral.
Here the simple behavior of $\varphi$ comes into and
\[
  |t-y|^m |\varphi(x+it)|
  \leq C \frac{|t-y|^m}{t^m}  \leq C
 \]
 is bounded for $x \in (a,b)$ and $y \leq x$ in $(0,\delta]$.
 We now apply the bounded convergence theorem to 
 \begin{multline*}
   \int^\delta_y dt\, (t-y)^m \int_a^b f^{(m+1)}(x) \varphi(x+it)\, dx\\
   = \int^\delta_0 dt\, (0\vee (t-y))^m \int_a^b f^{(m+1)}(x) \varphi(x+it)\, dx
 \end{multline*}
 and obtain
 \begin{align*}
   \lim_{y \to +0} &\int^\delta_y dt\, (t-y)^m \int_a^b f^{(m+1)}(x) \varphi(x+it)\, dx\\
   &= \int^\delta_0 dt\, \lim_{y \to +0} \bigl(0\vee (t-y)\bigr)^m \int_a^b f^{(m+1)}(x) \varphi(x+it)\, dx\\
   &= \int^\delta_0 dt\, t^m \int_a^b f^{(m+1)}(x) \varphi(x+it)\, dx\\
   &= \int_a^bdx\, f^{(m+1)}(x) \int_0^\delta t^m \varphi(x+it)\, dt. 
 \end{align*}
 Note here that
 \[
   \int_0^\delta t^m \varphi(x+it)\, dt
   = \lim_{n \to \infty} \int_{1/n}^\delta t^m \varphi(x+it)\, dt
 \]
 is a bounded measurable function of $x \in (a,b)$ as a bounded sequential limit of continuous functons
 $\int_{1/n}^\delta t^m \varphi(x+it)\, dt$ ($n \geq 1$). 

 \section{}
 We shall describe the inversion formula for representing measures on the real line $\R$.
 Let $\lambda$ be the representing measure of a holomorphic function $\varphi$ on $\C \setminus \R$:
 \[
   \varphi(z) = \frac{\varphi(i) + \varphi(-i)}{2} + \int_{\overline{\R}} \frac{1+sz}{s-z}\, \lambda(ds).
 \]
 Then $\lambda$ on $\R$ is known to be recovered from $\varphi(z)$ by
 \[
   \lambda(dx) = \lim_{y \to +0} \frac{\varphi(x+iy) - \varphi(x-iy)}{2\pi i(1+x^2)}\, dx, 
 \]
 for which we clarify its meaning. 

 In the above integral representation,
 \[
     \frac{1+sz}{s-z} - \frac{1+s\overline{z}}{s - \overline{z}}
     = \frac{1+s^2}{s-z} - \frac{1+s^2}{s - \overline{z}} = (1+s^2) \frac{z - \overline{z}}{|s-z|^2}
     = \frac{2iy(1+s^2)}{(s-x)^2 + y^2}
 \]
 and hence
 \begin{align*}
   \varphi(x+iy) - \varphi(x-iy)
   &= 2iy \int_{\overline{\R}} \frac{1+s^2}{(s-x)^2 + y^2}\, \lambda(ds)\\
   &= 2i y \lambda[\infty] + 2iy \int_{\R} \frac{1+s^2}{(s-x)^2 + y^2}\, \lambda(ds)
 \end{align*}
 Thus the functional norm of the approximating measure is bounded by 
 \begin{multline*}
   \int_{-\infty}^\infty
   \frac{|\varphi(x+iy) - \varphi(x-iy)|}{1+x^2}\, dx\\
   \leq 2\pi y |\lambda[\infty]| + \int_{{\R}}|\lambda|(ds)\, (1+s^2) \int_{-\infty}^\infty \frac{2y}{(1+x^2)((s-x)^2 + y^2)}\, dx,
 \end{multline*}
 where the inner integral in the second term is residue-calculated as
 \begin{align*}
   \int_{-\infty}^\infty \frac{2y}{(1+x^2)((s-x)^2 + y^2)}\, dx
   &= \oint_{z=i} \frac{2y}{(1+z^2)((s-z)^2 + y^2)}\, dz\\
   &\qquad+ \oint_{z=s+iy} \frac{2y}{(1+z^2)((s-z)^2 + y^2)}\, dz\\
   &= \frac{2y}{2i((s-i)^2 + y^2)} \oint_{z=i} \frac{dz}{z-i}\\
   &\qquad+ \frac{2y}{(1+(s+iy)^2) 2iy} \oint_{z=s+iy} \frac{dz}{z-s-iy}\\
   &= 2\pi \left( \frac{y}{(s-i)^2 + y^2} + \frac{1}{1 + (s+iy)^2} \right). 
 \end{align*}
 The last expression is further reduced to
 \begin{align*}
  &\frac{y}{(s-i)^2 + y^2} + \frac{1}{1 + (s+iy)^2}\\
   &\qquad= \frac{1}{2i} \left( \frac{1}{s-i-iy} - \frac{1}{s-i+iy} \right)
     + \frac{1}{2i} \left( \frac{1}{s+iy-i} - \frac{1}{s+iy+i} \right)\\
   &\qquad= \frac{1}{2i} \left( \frac{1}{s-iy-i} - \frac{1}{s+iy+i} \right)
     = \frac{y+1}{s^2 + (y+1)^2}, 
 \end{align*}
 resulting in the identity 
 \[
   \int_{-\infty}^\infty \frac{2y}{(1+x^2)((s-x)^2 + y^2)}\, dx
   = \frac{2\pi(y+1)}{s^2 + (y+1)^2}. 
 \]
 The norm of the functional is therefore dominated by
 \[
   2\pi y|\lambda[\infty]| + 2\pi(y+1) \int_{{\R}} \frac{s^2+1}{s^2 + (y+1)^2} |\lambda|(ds)
   \leq 2\pi y|\lambda|(\overline{\R}) + 2\pi |\lambda|(\R). 
 \]
 
 When $\lambda \geq 0$, i.e., $(\varphi(x+iy) - \varphi(x-iy))/i \geq 0$,
 \begin{align*}
   &\int_{-\infty}^\infty  \frac{\varphi(x+iy) - \varphi(x-iy)}{i(1+x^2)}\, dx\\
   &\qquad= 2y \lambda[\infty] \int_{-\infty}^\infty \frac{1}{1+x^2}\, dx + \int_{-\infty}^\infty \frac{2y}{1+x^2} \int_\R \frac{1+s^2}{(s-x)^2 + y^2}\,
     \lambda(ds)\\
   &\qquad= 2\pi y \lambda[\infty] + 2\pi (y+1) \int_\R \frac{s^2+1}{s^2 + (y+1)^2}\, \lambda((ds).
 \end{align*}
 Then, by linearity on $\varphi$ and $\lambda$, the relation remains valid even for a complex measure $\lambda$. 

 As a summary, we have proved the following.

 \begin{Proposition}\label{preinversion}
   Let a $\varphi$ be a holomorphic function on the imaginary domain $\C \setminus \R$ and assume that $\varphi$ has
   a representing measure $\lambda$ on $\overline{\R}$.
   
   Then continuous functions $(\varphi(x+iy) - \varphi(x-iy))/(1+x^2)$ ($y > 0$) of $x \in \R$ are absolutely integrable
   and satisfy 
   \[
    \int_{-\infty}^\infty
    \frac{|\varphi(x+iy) - \varphi(x-iy)|}{1+x^2}\, dx
    \leq 2\pi y|\lambda|(\overline{\R}) + 2\pi |\lambda|(\R)
  \]
  and 
   \[
   \int_{-\infty}^\infty  \frac{\varphi(x+iy) - \varphi(x-iy)}{1+x^2}\, dx
   = 2\pi i \lambda[\infty] y +  2\pi i (y+1) \int_\R \frac{s^2+1}{s^2 + (y+1)^2}\, \lambda(ds).
 \]
 Here $|\lambda|$ is the total variation (measure) of $\lambda$. 
 Consequently,
 \[
   \lim_{y \to +0} \int_{-\infty}^\infty  \frac{\varphi(x+iy) - \varphi(x-iy)}{1+x^2}\, dx
   = 2\pi i \int_\R \frac{s^2+1}{s^2 + (y+1)^2}\, \lambda(ds).
 \]
 \end{Proposition}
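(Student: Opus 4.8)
The plan is to establish all three assertions by a single direct computation from the integral representation of $\varphi$ against $\lambda$, exploiting the fact that the reflected difference $\varphi(x+iy) - \varphi(x-iy)$ collapses to a Poisson-type kernel. First I would insert $z = x+iy$ and $\overline z = x-iy$ into the representing kernel and compute
\[
  \frac{1+sz}{s-z} - \frac{1+s\overline z}{s-\overline z} = \frac{2iy(1+s^2)}{(s-x)^2+y^2},
\]
so that, after isolating the atom at $\infty$, one obtains
\[
  \varphi(x+iy) - \varphi(x-iy) = 2iy\,\lambda[\infty] + 2iy \int_\R \frac{1+s^2}{(s-x)^2+y^2}\, \lambda(ds).
\]
Continuity in $x$ is immediate since $\varphi$ is holomorphic off $\R$ and $1+x^2>0$, so only absolute integrability remains, and that is exactly the first displayed bound.

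The computational heart of both the bound and the exact formula is the elementary planar integral
\[
  \int_{-\infty}^\infty \frac{2y}{(1+x^2)((s-x)^2+y^2)}\, dx = \frac{2\pi(y+1)}{s^2+(y+1)^2},
\]
which I would evaluate by residues, closing the contour in the upper half-plane and collecting the simple poles at $z=i$ and $z=s+iy$ (the only ones with positive imaginary part when $y>0$); the two residues combine, after a short partial-fraction simplification, into the stated closed form. With this identity in hand, the integrability estimate follows by passing to absolute values, invoking Tonelli to integrate first in $x$ against $|\lambda|$, and then using the trivial bound $\frac{s^2+1}{s^2+(y+1)^2}\le 1$ (valid since $(y+1)^2\ge 1$ for $y>0$) together with $|\lambda|(\overline{\R}) = |\lambda|(\R) + |\lambda[\infty]|$.

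For the exact identity I would first treat the case $\lambda\ge 0$, where every integrand is nonnegative and the estimate just proved guarantees finiteness, so Fubini legitimately interchanges the $x$- and $s$-integrations; evaluating the inner $x$-integral by the residue identity above and using $\int_\R (1+x^2)^{-1}\,dx = \pi$ yields the displayed formula. The general complex case then follows by linearity, decomposing $\lambda$ into its real and imaginary parts and each of those into positive and negative Jordan components. Finally, the limit $y\to +0$ is obtained by dominated convergence: the term $2\pi i y\,\lambda[\infty]$ vanishes, while $\frac{s^2+1}{s^2+(y+1)^2}\to 1$ pointwise under the $y$-uniform dominating function $1\in L^1(|\lambda|)$, so the limiting value is $2\pi i\,\lambda(\R)$.

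The only genuinely delicate point is justifying the interchange of integration in the complex case; here I would lean on the absolute-integrability bound established in the first step, which supplies the integrable majorant needed both for Fubini and, in the last step, for dominated convergence. The residue evaluation, while central, is routine once the poles in the upper half-plane are correctly identified.
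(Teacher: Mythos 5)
Your proposal is correct and follows essentially the same route as the paper: the same kernel-difference identity, the same residue evaluation of $\int_{-\infty}^\infty \frac{2y}{(1+x^2)((s-x)^2+y^2)}\,dx = \frac{2\pi(y+1)}{s^2+(y+1)^2}$, the same Tonelli/positive-case-then-linearity argument, and dominated convergence for the final limit (where you also correctly read the intended value $2\pi i\,\lambda(\R)$, the paper's displayed right-hand side having a leftover $y$).
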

 
 Let $f \in C(\overline{\R})$, i.e., $f$ is a bounded continuous function on $\R$ satisfying
 $\lim_{x \to +\infty} f(x) = \lim_{x \to -\infty} f(x)$, and consider a linear functional 
 \[
 \lambda_y(f) =  \int_{-\infty}^\infty f(x) \frac{\varphi(x+iy) - \varphi(x-iy)}{2\pi i(1+x^2)}\, dx 
 \]
 of $f \in C(\overline{\R})$. Notice that the integrand is continuous and absolutely integrable by Proposition~\ref{preinversion}.
 By the integral representation of $\varphi$, we have
 \[
   \lambda_y(f) = \frac{\lambda[\infty]}{\pi} y \int_{-\infty}^\infty \frac{f(x)}{1+x^2}\, dx
   + \frac{1}{\pi} \int_\R \lambda(ds) \int_{-\infty}^\infty f(x) \frac{1+s^2}{1+x^2} \frac{y}{(s-x)^2 + y^2}\, dx, 
 \]
 which is reduced to (ii) in Proposition~\ref{preinversion} for $f = 1$.
 
 In view of the formal relation
 \[
   \lim_{y \to +0} \frac{y}{(x-s)^2 + y^2} = \pi \delta(x-s), 
 \]
 the last term is expected to converge to
 \[
   \lambda_0(f) \equiv \int_\R f(s)\, \lambda(ds)
 \]
 as $y \to +0$.

 A precise reasoning is as follows:
 Given $\epsilon>0$, choose $L>0$ large enough to satisfy
 \[
 |\lambda|(\R \setminus [-L,L]) = |\lambda|(\R) -|\lambda|([-L,l]) \leq \epsilon. 
 \]
 Then
 \begin{align*}
   &\left| \int_{\R \setminus [-L,L]}
   \lambda(ds) \int_{-\infty}^\infty f(x) \frac{1+s^2}{1+x^2} \frac{y}{(x-s)^2 + y^2}\, dx \right|\\
   &\qquad\leq \pi (y+1) \| f\|_\infty \int_{\R \setminus [-L,L]} \frac{s^2+1}{s^2 + (y+1)^2}\, |\lambda|(ds)\\
   &\qquad\leq \pi (y+1) \| f\|_\infty\, |\lambda|(\R \setminus [-L,L]) \leq \pi (y+1) \| f\|_\infty \epsilon. 
 \end{align*}
 To handle the remaining part of the integral, recall that the convolution with the approximate delta functions
 \[
   \frac{1}{\pi} \frac{y}{x^2+y^2}
 \]
 of $x \in \R$ approximate the identity operator on $C_0(\R)$ as $y \to +0$, which enables us to 
 choose $\delta>0$ small enough to satisfy
 \[
   \left| \int_{-\infty}^\infty \frac{f(x)}{1+x^2} \frac{y}{\pi((s-x)^2 + y^2)}\, dx - \frac{f(s)}{1+s^2} \right| \leq \frac{\epsilon}{1+L^2}
 \]
 for $s \in \R$ and $0 < y < \delta$. Then
 \begin{multline*}
   \left| \int_{[-L,L]} \lambda(ds) \int_{-\infty}^\infty f(x) \frac{1+s^2}{\pi(1+x^2)} \frac{y}{(s-x)^2 + y^2}\, dx
     - \int_{[-L,L]} f(s)\, \lambda(ds)\right|\\
   \leq \epsilon \int_{[-L,L]} \frac{1+s^2}{1+L^2}\, |\lambda|(ds) \leq \epsilon |\lambda|([-L,L]) \leq \epsilon |\lambda|(\R). 
 \end{multline*}

 Putting all these together, we have
 \begin{multline*}
    \left| \int_{\R} \lambda(ds) \int_{-\infty}^\infty f(x) \frac{1+s^2}{\pi(1+x^2)} \frac{y}{(s-x)^2 + y^2}\, dx
      - \int_{\R} f(s)\, \lambda(ds)\right|\\
    \leq \| f\|_\infty (y + 2) \epsilon + |\lambda|(\R) \epsilon. 
  \end{multline*}
  Since $\epsilon > 0$ is arbitrary, we have proved the following.

  \begin{Theorem}\label{inversion}
    Under the same assumption in Proposition~\ref{preinversion}, we have for $f \in C(\overline{\R})$
    \[
      \lim_{y \to +0} \int_{-\infty}^\infty f(x) \frac{\varphi(x+iy) - \varphi(x-iy)}{2\pi i(1+x^2)}\, dx
      =  \int_{\R} f(s)\, \lambda(ds). 
    \]
    
    Notice that the point $\infty$ is missing in the right integral and therefore
    the atomic mass $\lambda[\infty]$ can not be recoverred in this form. 
  \end{Theorem}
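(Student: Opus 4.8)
The plan is to treat the approximating expression as a one-parameter family of linear functionals $\lambda_y$ on $C(\overline{\R})$ and to compute $\lim_{y\to+0}\lambda_y(f)$ by inserting the integral representation of $\varphi$. Using the formula for $\varphi(x+iy)-\varphi(x-iy)$ from Proposition~\ref{preinversion}, the functional $\lambda_y(f)$ separates into a term proportional to $y\,\lambda[\infty]$, which vanishes in the limit because of its explicit factor $y$ (this is precisely where the atomic mass at $\infty$ is lost), and the double integral
\[
  \frac{1}{\pi}\int_\R \lambda(ds)\int_{-\infty}^\infty f(x)\,\frac{1+s^2}{1+x^2}\,\frac{y}{(s-x)^2+y^2}\,dx .
\]
The inner integral is the convolution of $f(x)/(1+x^2)$ with the Poisson kernel $\frac{1}{\pi}\,\frac{y}{(s-x)^2+y^2}$, so the whole problem is reduced to the behaviour of this approximate identity.

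First I would use the hypothesis $f\in C(\overline{\R})$: since $f$ is bounded and its limits at $\pm\infty$ agree, the weighted function $f(x)/(1+x^2)$ lies in $C_0(\R)$, on which the Poisson kernel acts as an approximate identity in the uniform norm. Thus the inner integral tends to $f(s)/(1+s^2)$ uniformly in $s$, and after multiplication by $(1+s^2)$ one formally reaches the candidate limit $\int_\R f(s)\,\lambda(ds)$.

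The substance of the argument is converting this uniform convergence into convergence of the $\lambda$-integral, and here the main obstacle is the non-compactness of $\R$: the factor $(1+s^2)$ spoils uniformity at infinity, so the limit cannot be pushed directly through the total-variation integral. I would therefore split $\R$ into a large symmetric interval $[-L,L]$ and its complement. On the tail I would bound the contribution using the uniform estimate $(y+1)(1+s^2)/(s^2+(y+1)^2)\le \text{const}$ already isolated in Proposition~\ref{preinversion}, together with $|\lambda|(\R\setminus[-L,L])\le\epsilon$ for $L$ large, making it $O(\epsilon)$. On the compact part the weight obeys $(1+s^2)/(1+L^2)\le 1$, so the uniform approximate-identity estimate controls the error by $\epsilon\,|\lambda|(\R)$ once $y$ is small.

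Assembling the two pieces yields a bound of the shape $\|f\|_\infty(y+2)\epsilon+|\lambda|(\R)\epsilon$ for $|\lambda_y(f)-\int_\R f\,d\lambda|$; letting $y\to+0$ and then $\epsilon\to0$ closes the argument. I expect the only delicate bookkeeping to be arranging the approximate-identity estimate to hold uniformly in $s\in[-L,L]$ while simultaneously keeping the weights $(1+s^2)$ and $1/(1+x^2)$ under control; the rest is the familiar $\epsilon$-splitting for weak-$*$ convergence against a finite complex measure.
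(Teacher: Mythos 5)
Your proposal is correct and follows essentially the same route as the paper: insert the integral representation to isolate the $y\,\lambda[\infty]$ term (which dies in the limit), recognize the remaining double integral as a Poisson convolution acting on $f(x)/(1+x^2)\in C_0(\R)$, and then perform the $\epsilon$-splitting of $\R$ into $[-L,L]$ and its complement, bounding the tail via the explicit kernel identity from Proposition~\ref{preinversion} and the compact part via the uniform approximate-identity estimate weighted by $(1+s^2)/(1+L^2)\le 1$. Even your final error bound $\|f\|_\infty(y+2)\epsilon+|\lambda|(\R)\epsilon$ coincides with the one in the paper, so there is nothing to add.
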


\end{document}